\journal{Topology and Its Applications}
\begin{document}

\newcommand{\s}{\sigma}
\newcommand{\al}{\alpha}
\newcommand{\om}{\omega}
\newcommand{\be}{\beta}
\newcommand{\la}{\lambda}

\newcommand{\bo}{\mathbf{0}}
\newcommand{\bone}{\mathbf{1}}

\newcommand{\sse}{\subseteq}
\newcommand{\contains}{\supseteq}
\newcommand{\forces}{\Vdash}

\newcommand{\FIN}{\mathrm{FIN}}

\newcommand{\ve}{\vee}
\newcommand{\w}{\wedge}
\newcommand{\bv}{\bigvee}
\newcommand{\bw}{\bigwedge}
\newcommand{\bcup}{\bigcup}
\newcommand{\bcap}{\bigcap}

\newcommand{\rgl}{\rangle}
\newcommand{\lgl}{\langle}
\newcommand{\lr}{\langle\ \rangle}
\newcommand{\re}{\restriction}

\newcommand{\bB}{\mathbb{B}}
\newcommand{\bP}{\mathbb{P}}
\newcommand{\bR}{\mathbb{R}}
\newcommand{\bC}{\mathbb{C}}
\newcommand{\bD}{\mathbb{D}}
\newcommand{\bN}{\mathbb{N}}
\newcommand{\bQ}{\mathbb{Q}}
\newcommand{\bS}{\mathbb{S}}
\newcommand{\St}{\tilde{S}}
   
\newcommand{\sd}{\triangle}
\newcommand{\cl}{\prec}
\newcommand{\cle}{\preccurlyeq}
\newcommand{\cg}{\succ}
\newcommand{\cge}{\succcurlyeq}
\newcommand{\dom}{\mathrm{dom}\,}

\newcommand{\lra}{\leftrightarrow}
\newcommand{\ra}{\rightarrow}
\newcommand{\llra}{\longleftrightarrow}
\newcommand{\Lla}{\Longleftarrow}
\newcommand{\Lra}{\Longrightarrow}
\newcommand{\Llra}{\Longleftrightarrow}
\newcommand{\rla}{\leftrightarrow}
\newcommand{\lora}{\longrightarrow}
\newcommand{\E}{\mathrm{E}}
\newcommand{\rank}{\mathrm{rank}}

\newtheorem{thm}{Theorem}  
\newtheorem{prop}[thm]{Proposition} 
\newtheorem{lem}[thm]{Lemma} 
\newtheorem{cor}[thm]{Corollary} 
\newtheorem{fact}[thm]{Fact}     
\newtheorem*{thmMT}{Main Theorem}
\newtheorem*{thmMTUT}{Main Theorem for $\vec{\mathcal{U}}$-trees}
\newtheorem*{thmnonumber}{Theorem}
\newtheorem*{mainclaim}{Main Claim}

\theoremstyle{definition}   
\newtheorem{defn}[thm]{Definition} 
\newtheorem{example}[thm]{Example} 
\newtheorem{conj}[thm]{Conjecture} 
\newtheorem{prob}[thm]{Problem} 
\newtheorem{examples}[thm]{Examples}
\newtheorem{question}[thm]{Question}
\newtheorem{problem}[thm]{Problem}
\newtheorem{openproblems}[thm]{Open Problems}
\newtheorem{openproblem}[thm]{Open Problem}
\newtheorem{conjecture}[thm]{Conjecture}
\newtheorem*{problem1}{Problem 1}
\newtheorem*{problem2}{Problem 2}
\newtheorem*{problem3}{Problem 3}
\newtheorem*{notn}{Notation}

\theoremstyle{remark} 
\newtheorem*{rem}{Remark} 
\newtheorem*{rems}{Remarks} 
\newtheorem*{ack}{Acknowledgments} 
\newtheorem*{note}{Note}
\newtheorem*{claim}{Claim}
\newtheorem*{claim1}{Claim $1$}
\newtheorem*{claim2}{Claim $2$}
\newtheorem*{claim3}{Claim $3$}
\newtheorem*{claim4}{Claim $4$}
\newtheorem{claimn}{Claim}
\newtheorem{subclaim}{Subclaim}
\newtheorem*{subclaimnn}{Subclaim}
\newtheorem*{subclaim1}{Subclaim (i)}
\newtheorem*{subclaim2}{Subclaim (ii)}
\newtheorem*{subclaim3}{Subclaim (iii)}
\newtheorem*{subclaim4}{Subclaim (iv)}
\newtheorem{case}{Case}

\begin{frontmatter}



\title{Continuous cofinal maps on ultrafilters}


\author{Natasha Dobrinen\corref{cor1}}
\ead{natasha.dobrinen@du.edu}
\ead[url]{http://web.cs.du.edu/$\sim$ndobrine/}
\cortext[cor1]{Corresponding Author}
\address{University of Denver,
Department of Mathematics, 2360 S Gaylord St, Denver, CO 80208, USA, +1.303.871.2120, fax: +1.303.871.3173}

\begin{abstract}
An ultrafilter $\mathcal{U}$ on a countable base {\em has continuous Tukey reductions} if whenever an ultrafilter $\mathcal{V}$ is Tukey reducible to $\mathcal{U}$,
then every monotone cofinal map $f:\mathcal{U}\ra\mathcal{V}$ is continuous  when restricted to some cofinal subset of $\mathcal{U}$.

In the first part of the paper, we give mild conditions under which the property of having  continuous Tukey reductions is inherited under Tukey reducibility.
In particular, if $\mathcal{U}$ is Tukey reducible to a  p-point
then $\mathcal{U}$ has continuous Tukey reductions.
In the second part, we show that any countable iteration of Fubini products of p-points  has  Tukey reductions  which are continuous with respect to its topological Ramsey space of $\vec{\mathcal{U}}$-trees.
\end{abstract}

\begin{keyword}
ultrafilter \sep Tukey \sep cofinal map\sep continuous map \sep p-point
\MSC Primary: 54D80 \sep 03E04; Secondary:  03E05 
\end{keyword}
\end{frontmatter}



\section{Introduction}
\label{sec.intro}

Let $D$ and $E$ be partial orderings.
We say that a function $f:E\ra D$ 
 is {\em cofinal} if the image of each cofinal subset of $E$ is cofinal in $D$.
We say that $D$ is {\em Tukey reducible} to $E$, and write $D\le_T E$, if there is a cofinal map from $E$ to $D$.
An equivalent formulation of Tukey reducibility was noticed by Schmidt in \cite{Schmidt55}.
Given partial orderings $D$ and $E$, a map
$g:D\ra E$ such that the image of each unbounded subset of $D$ is an unbounded subset of $E$ is called a {\em Tukey map} or an {\em unbounded map}.
$D\le_T E$ iff there is a Tukey map from $D$ into $E$.
If both $D\le_T E$ and $E\le_T D$, then we write $D\equiv_T E$ and say that $D$ and $E$ are Tukey equivalent.
$\equiv_T$ is an equivalence relation, and $\le_T$ on the equivalence classes forms a partial ordering. 
The equivalence classes can be called {\em Tukey types}.

The notion of  Tukey reducibility between two  directed partial orderings was first introduced by Tukey in 
 \cite{Tukey40}
to more finely study the Moore-Smith theory of net convergence in topology.
This naturally led to investigations of Tukey types of more general partial orderings, directed and later non-directed. 
These investigations 
 often reveal useful information for the comparison of different partial orderings.
For example, Tukey reducibility downward preserves calibre-like properties, such as the countable chain condition, property K, precalibre $\aleph_1$, $\sigma$-linked, and $\sigma$-centered (see \cite{Todorcevic96}).
For more on classification theories of Tukey types for certain classes of ordered sets, we refer the reader to \cite{Tukey40},
\cite{Day44}, \cite{Isbell65}, \cite{TodorcevicDirSets85}, and
\cite{Todorcevic96}.


In this paper we continue a recent line of research into the structure of the Tukey types of ultrafilters on $\om$ ordered by reverse inclusion.
(See \cite{Milovich08}, \cite{Dobrinen/Todorcevic10}, 
 \cite{Raghavan/Todorcevic11}, and \cite{Dobrinen/Todorcevic11}.)
For any ultrafilter $\mathcal{U}$  on $\om$,
$(\mathcal{U},\contains)$ is a directed partial ordering.
We remark  that 
for any two directed partial orderings $D$ and $E$, $D\equiv_T E$ iff $D$ and $E$ are {\em cofinally similar}; that is, there is a partial ordering  into which both $D$ and $E$ embed as cofinal subsets (see \cite{Tukey40}).
So for ultrafilters, Tukey equivalence is the same as cofinal similarity.

For ultrafilters, we may restrict our attention to monontone cofinal maps.
We say that a map $f:\mathcal{U}\ra\mathcal{V}$ is \em monotone \rm if
 for any $X,Y\in\mathcal{U}$, $X\contains Y$ implies $f(X)\contains f(Y)$.
It is not hard to show that whenever $\mathcal{U}\ge_T\mathcal{V}$, then there is a \em monotone \rm cofinal map witnessing this (see Fact 6 of \cite{Dobrinen/Todorcevic10}).
Thus, we shall generally assume that each cofinal map under consideration is monotone.

Another motivation for 
this study  is that Tukey reducibility is a 
generalization of  Rudin-Keisler reducibility.
Recall that 
$\mathcal{U}\ge_{RK}\mathcal{V}$ iff there is a function $f:\om\ra\om$ such that the ultrafilter generated by the collection $\{f(U):U\in\mathcal{U}\}$
is equal to $\mathcal{V}$.
Whenever $\mathcal{U}\ge_{RK}\mathcal{V}$, then also $\mathcal{U}\ge_{T}\mathcal{V}$ (see Fact 1 in \cite{Dobrinen/Todorcevic10}).
In general, Tukey and Rudin-Keisler reducibility are  quite distinct.
Various instances of this can be seen in \cite{Dobrinen/Todorcevic10} and \cite{Raghavan/Todorcevic11}, and in the following.

\begin{thm}[Isbell \cite{Isbell65}]\label{thm.3}
There is an ultrafilter $\mathcal{U}_{\mathrm{top}}$ on $\om$ realizing the maximal cofinal type among all directed sets of cardinality continuum; that is, $\mathcal{U}_{\mathrm{top}}\equiv_T[\mathfrak{c}]^{<\om}$.
\end{thm}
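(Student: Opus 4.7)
The plan is to establish both reductions $\mathcal{U}_{\mathrm{top}} \leq_T [\mathfrak{c}]^{<\omega}$ and $[\mathfrak{c}]^{<\omega} \leq_T \mathcal{U}_{\mathrm{top}}$, with the first being essentially automatic and the second requiring a transfinite construction. For the easy direction, I would first record the general fact that every directed set $D$ of cardinality at most $\mathfrak{c}$ satisfies $D \leq_T [\mathfrak{c}]^{<\omega}$: enumerating $D = \{d_\alpha : \alpha < \mathfrak{c}\}$ and, using directedness, picking an upper bound $f(F)$ of $\{d_\alpha : \alpha \in F\}$ for each $F \in [\mathfrak{c}]^{<\omega}$, one obtains a cofinal map $f$, since for any $d_\beta \in D$ a cofinal $\mathcal{A} \subseteq [\mathfrak{c}]^{<\omega}$ contains some $F \ni \beta$ with $f(F) \geq d_\beta$. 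Applied with $D = \mathcal{U}_{\mathrm{top}}$ (every non-principal ultrafilter on $\omega$ has cardinality exactly $\mathfrak{c}$) and upper bounds taken to be finite intersections of filter sets, this yields $\mathcal{U}_{\mathrm{top}} \leq_T [\mathfrak{c}]^{<\omega}$ as soon as the ultrafilter is constructed.

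For the reverse direction I would reduce to the combinatorial task of producing an ultrafilter $\mathcal{U}_{\mathrm{top}}$ on $\omega$ together with a family $\{X_\alpha : \alpha < \mathfrak{c}\} \subseteq \mathcal{U}_{\mathrm{top}}$ such that $\bigcap_{\alpha \in A} X_\alpha \notin \mathcal{U}_{\mathrm{top}}$ for every countably infinite $A \subseteq \mathfrak{c}$ (the condition automatically extends to all infinite $A$ since $\mathcal{U}_{\mathrm{top}}$ is upward closed). Once such $\mathcal{U}_{\mathrm{top}}$ is in hand, the map $g : [\mathfrak{c}]^{<\omega} \to \mathcal{U}_{\mathrm{top}}$ defined by $g(F) = \bigcap_{\alpha \in F} X_\alpha$ is Tukey in Schmidt's sense: any unbounded $\mathcal{F} \subseteq [\mathfrak{c}]^{<\omega}$ has infinite union, so $\bigcap_{F \in \mathcal{F}} g(F) = \bigcap_{\alpha \in \bigcup \mathcal{F}} X_\alpha \notin \mathcal{U}_{\mathrm{top}}$, and hence $g[\mathcal{F}]$ admits no common lower bound in $\mathcal{U}_{\mathrm{top}}$.

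The construction of $\mathcal{U}_{\mathrm{top}}$ and the family $\{X_\alpha\}$ proceeds by transfinite recursion of length $\mathfrak{c}$. Fix enumerations $\{Z_\alpha : \alpha < \mathfrak{c}\}$ of $\mathcal{P}(\omega)$ and $\{A_\alpha : \alpha < \mathfrak{c}\}$ of the countably infinite subsets of $\mathfrak{c}$, arranged so that $A_\alpha \subseteq \alpha$. At stage $\alpha$, starting from a proper filter $\mathcal{F}_\alpha$ containing $\{X_\beta : \beta < \alpha\}$ together with all previously adopted decisions, I would adjoin to $\mathcal{F}_\alpha$ whichever of $Z_\alpha$ or $\omega \setminus Z_\alpha$ is compatible, then adjoin the diagonalizing complement $\omega \setminus \bigcap_{\beta \in A_\alpha} X_\beta$, and finally choose a new $X_\alpha$ in the resulting filter. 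The heart of the argument, and the main obstacle, is verifying that each diagonalization step preserves properness: one must show that $\omega \setminus \bigcap_{\beta \in A_\alpha} X_\beta$ meets every finite intersection of sets already committed to $\mathcal{F}_\alpha$. I would secure this by drawing the $X_\beta$'s as Boolean combinations from a fixed independent family of size $\mathfrak{c}$ on $\omega$, so that at every stage there remain uncountably many independent coordinates available to witness the nontriviality of each required complement against the finitely many generators currently in play. Setting $\mathcal{U}_{\mathrm{top}} = \bigcup_{\alpha < \mathfrak{c}} \mathcal{F}_\alpha$ then produces the desired ultrafilter.
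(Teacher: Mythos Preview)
The paper does not actually prove this theorem; it is quoted as Isbell's result, and the only hint the paper gives about the proof is the subsequent remark that ``any collection of independent sets can be used in a canonical way to construct an ultrafilter with maximal type.'' Your overall plan---use an independent family $\{I_\alpha:\alpha<\mathfrak{c}\}$, arrange that no infinite intersection $\bigcap_{\alpha\in A}I_\alpha$ survives into the ultrafilter, and take $F\mapsto\bigcap_{\alpha\in F}I_\alpha$ as the Tukey map---is exactly the classical construction and is correct in outline.

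However, the interleaved recursion you describe has a genuine gap. At stage $\alpha$ you first adjoin whichever of $Z_\alpha$ or $\omega\setminus Z_\alpha$ is compatible with $\mathcal{F}_\alpha$, and only afterwards attempt to adjoin $\omega\setminus\bigcap_{\beta\in A_\alpha}X_\beta$. But the $Z$-choices are arbitrary subsets of $\omega$, and nothing prevents some early $Z_\gamma$ from being (say) $\bigcap_{n<\omega}X_n$ itself; if you happen to put that $Z_\gamma$ into the filter, then at the later stage where $A_\alpha=\omega$ the diagonalization step is blocked. Independence of the $X_\beta$'s controls how they interact with one another and with the previously adjoined diagonalizing complements, but it gives no leverage against the arbitrary sets $Z_\gamma$ already committed to the filter. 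The fix is to drop the interleaving: first form the filter $\mathcal{F}$ generated by $\{I_\alpha:\alpha<\mathfrak{c}\}$ together with all sets $\omega\setminus\bigcap_{\alpha\in A}I_\alpha$ for infinite $A\subseteq\mathfrak{c}$, verify directly from independence that every finite subfamily of these generators has infinite intersection (so $\mathcal{F}$ is proper), and only then extend $\mathcal{F}$ to an ultrafilter. This is the construction the paper alludes to.
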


\begin{rem}
The same construction in Isbell's  proof was done independently by  Juh\'{a}sz in \cite{Juhasz66} (stated in \cite{Juhasz67}) in connection with strengthening a theorem of Posp{\'{i}}{\v{s}}il \cite{Pospisil39}, though without the Tukey terminology.
\end{rem}

Note that there are  $2^{\mathfrak{c}}$ many different ultrafilters of maximal Tukey type, since any collection of independent sets can be used in a canonical way to construct an ultrafilter with maximal type.
Thus the top Tukey type has cardinality $2^{\mathfrak{c}}$.
In contrast, every Rudin-Keisler equivalence class has cardinality $\mathfrak{c}$.
Moreover, there is no maximal equivalence class in the Rudin-Keisler ordering.
So the maximal Tukey class is contains $2^{\mathfrak{c}}$ many Rudin-Keisler equivalence classes, none of which is maximal in the Rudin-Keisler sense.
Thus,  for the case of the maximal Tukey type, the Rudin-Keisler equivalence relation is strictly finer than the Tukey 
equivalence relation.

We now turn our attention to p-points.
\begin{defn}\label{defn.p-point}
An ultrafilter $\mathcal{U}$ on $\om$ is a \em p-point \rm iff for each decreasing sequence $A_0\contains A_1\contains \dots$ of elements of $\mathcal{U}$, there is an $A\in\mathcal{U}$
such that $A\sse^* A_n$, for all $n<\om$.
\end{defn}

We note that Isbell's Problem \cite{Isbell65}, whether it is true in ZFC that there is an ultrafilter with Tukey type strictly below the maximal type, is still open.
It was shown in \cite{Dobrinen/Todorcevic10} that countable iterations of Fubini products of p-points (and in fact the more general class of so-called ``basically generated'' ultrafilters) are strictly below the maximal Tukey type.

It follows from work in \cite{Solecki/Todorcevic04} that p-points 
have the following special property:
If $\mathcal{U}$ is a p-point and $\mathcal{V}\le_T\mathcal{U}$, 
then there is a 
 definable monotone cofinal map from $\mathcal{U}$ into $\mathcal{V}$. 
Hence every p-point has Tukey type of cardinality $\mathfrak{c}$.
This fact is quite useful in analyzing  the structure of the Tukey types of p-points, as seen in \cite{Dobrinen/Todorcevic10} and \cite{Raghavan/Todorcevic11}.

In fact, p-points have even stronger properties in terms of cofinal maps.
Identify $\mathcal{P}(\om)$ with $2^{\om}$, the set of characteristic functions of subsets of  $\omega$, and endow $\mathcal{P}(\om)$ with the corresponding topology. 
A sequence $(X_n)_{n<\om}$ of elements of $\mathcal{P}(\om)$ converges to an element $X\in\mathcal{P}(\om)$ iff for each $k<\om$ there is an $N<\om$ such that for each $n\ge N$, $X_n\cap k = X\cap k$.
A function $f:\mathcal{P}(\om)\ra\mathcal{P}(\om)$ is \em continuous \rm iff whenever $X_n\ra X$, then also $f(X_n)\ra f(X)$.
A function $f:\mathcal{U}\ra\mathcal{V}$ is said to be  continuous if it is continuous on $\mathcal{U}$ considered as a topological subspace of $\mathcal{P}(\om)$.
\begin{defn}
(1)
An ultrafilter $\mathcal{U}$ \em supports continuous cofinal maps \rm if whenever $\mathcal{V}\le_T\mathcal{U}$, there is a continuous, monotone cofinal map $f:\mathcal{U}\ra\mathcal{V}$.

(2)
$\mathcal{U}$ \em
has continuous Tukey reductions \rm if whenever $f:\mathcal{U}\ra\mathcal{V}$ is a monotone cofinal map, 
then there is a cofinal subset $\mathcal{X}\sse\mathcal{U}$ such that $f\re\mathcal{X}$ is continuous, with respect to the subspace topology on $\mathcal{X}$ inherited from $2^{\om}$.

(3)
$\mathcal{U}$ \em has basic Tukey reductions \rm if whenever $f:\mathcal{U}\ra\mathcal{V}$ is a monotone cofinal map, there is a cofinal subset $\mathcal{X}\sse\mathcal{U}$ such that $f\re\mathcal{X}$ is continuous; moreover, there is a continuous monotone map $\tilde{f}:\mathcal{P}(\om)\ra\mathcal{P}(\om)$ such that $\tilde{f}\re\mathcal{X}=f\re\mathcal{X}$, and $\tilde{f}\re \mathcal{U}:\mathcal{U}\ra\mathcal{V}$ is a cofinal map.
\end{defn}
Note that (3) $\Rightarrow$ (2) $\Rightarrow$ (1). 
We point out  that any ultrafilter which supports continuous cofinal maps has Tukey type of cardinality $\mathfrak{c}$.
The following theorem  served as motivation for the study of which ultrafilters have continuous Tukey reductions, and when, if ever, the property of having continuous Tukey reductions is inherited under Tukey reducibility.
The following is a stronger restatement of  Theorem 20 of Dobrinen and Todorcevic in \cite{Dobrinen/Todorcevic10}, which follows from the proof.

\begin{thm}[Dobrinen/Todorcevic \cite{Dobrinen/Todorcevic10}]\label{thm.ppoint}
Suppose $\mathcal{U}$ is a p-point on $\om$.
Then $\mathcal{U}$ has basic Tukey reductions.
\end{thm}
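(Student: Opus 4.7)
The plan is to build, by a fusion-style induction using only the ultrafilter property of $\mathcal{U}$ together with monotonicity of $f$, a decreasing sequence $A_0\contains A_1\contains\cdots$ in $\mathcal{U}$ and integers $k_0<k_1<\cdots$ with the property that, for each $n$ and each $s\sse k_n$, the value $f(B)\cap n$ is a fixed set $t^n_s\sse n$ for every $B\in\mathcal{U}$ satisfying $B\cap k_n=s$ and $B\sse A_n$. Once this is in hand, define
\[
\tilde f(X)\cap n \;=\; t^n_{X\cap A_n\cap k_n}\qquad (X\in\mathcal{P}(\om)),
\]
and use the p-point property to extract a cofinal subset $\mathcal{X}\sse\mathcal{U}$ on which $f$ and $\tilde f$ literally agree.

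The fusion step is the heart of the argument. At stage $n$, pick $k_n>k_{n-1}$; for each of the finitely many $s\sse k_n$ the map $C\mapsto f(s\cup C)\cap n$ from $\mathcal{U}\cap\mathcal{P}(A_{n-1}\setminus k_n)$ into the finite poset $\mathcal{P}(n)$ is $\sse$-monotone in $C$, so by iterating the replacement ``if some $C$ gives a strictly smaller value, pass to that $C$'' at most $n$ times one obtains $U^n_s\in\mathcal{U}$, $U^n_s\sse A_{n-1}\setminus k_n$, on which the value stabilizes at its $\sse$-minimum $t^n_s$. Set $A_n=(A_{n-1}\cap k_n)\cup\bcap_{s\sse k_n}U^n_s$. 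This keeps $A_n\in\mathcal{U}$, gives $A_n\sse A_{n-1}$, and crucially leaves $A_n\cap k_n=A_{n-1}\cap k_n$, so that the initial segments of the $A_n$'s stabilize: $A_m\cap k_n=A_n\cap k_n$ for all $m\ge n$. Monotonicity of $s\mapsto t^n_s$ then follows from monotonicity of $f$, and the ``stability of the tower'' forces the coherence $t^{n+1}_{s'}\cap n=t^n_{s'\cap k_n}$ for every $s'\sse A_{n+1}\cap k_{n+1}$ (apply both defining equations to any $B\in\mathcal{U}$ with $B\cap k_{n+1}=s'$, $B\sse A_{n+1}$, noting $B\sse A_n$ and $B\cap k_n=s'\cap k_n$). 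This coherence is exactly what makes $\tilde f$ a well-defined subset of $\om$; continuity and monotonicity of $\tilde f:\mathcal{P}(\om)\to\mathcal{P}(\om)$ are then immediate.

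Now invoke the p-point property to obtain $A^*\in\mathcal{U}$ with $A^*\sse^*A_n$ for all $n$. Using the stabilization $A_m\cap k_n=A_n\cap k_n$ from the fusion, one can shrink $A^*$ by a finite set at each level to arrange $A^*\setminus k_n\sse A_n$ for every $n$, and then define
\[
\mathcal{X}=\{B\in\mathcal{U}:B\sse A^*\ \text{and}\ \forall n\,(B\setminus k_n\sse A_n)\}.
\]
For $B\in\mathcal{X}$ one has $B\cap k_n\sse A_n\cap k_n$, so $B\cap A_n\cap k_n=B\cap k_n$, and $B\setminus k_n\sse A_n\setminus k_n\sse U^n_{B\cap k_n}$, so $f(B)\cap n=t^n_{B\cap k_n}=\tilde f(B)\cap n$ for every $n$; hence $f\re\mathcal{X}=\tilde f\re\mathcal{X}$. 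Cofinality of $\mathcal{X}$ in $\mathcal{U}$ is verified by taking any $Y\in\mathcal{U}$, forming a pseudo-intersection $B'$ of $\{Y\}\cup\{A_n:n<\om\}$ via p-point, and deleting from $B'$ the finitely many elements that witness the failure of $B'\setminus k_n\sse A_n$ for the (finitely many) small $n$'s.

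It remains to see that $\tilde f\re\mathcal{U}:\mathcal{U}\to\mathcal{V}$ is cofinal. For $U\in\mathcal{U}$, cofinality of $\mathcal{X}$ yields $B\in\mathcal{X}$ with $B\sse U$, whence by monotonicity $\tilde f(U)\contains\tilde f(B)=f(B)\in\mathcal{V}$, so $\tilde f(U)\in\mathcal{V}$; for any $V\in\mathcal{V}$, cofinality of $f$ supplies $U\in\mathcal{U}$ with $f(U)\sse V$, and then $B\in\mathcal{X}$ with $B\sse U$ gives $\tilde f(B)=f(B)\sse V$. The main obstacle I anticipate is the coherence issue in the fusion --- verifying that the minimal values $t^n_s$ chosen level-by-level actually fit together into a well-defined, continuous $\tilde f$; this is what forces the careful design of $A_n$ as $(A_{n-1}\cap k_n)\cup\bcap_s U^n_s$ and the passage to the sub-cofinal $\mathcal{X}$ rather than working directly with arbitrary $B\sse A^*$.
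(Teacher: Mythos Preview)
Your fusion argument is essentially the one behind Theorem~20 of \cite{Dobrinen/Todorcevic10}, which the present paper cites rather than reproves: one stabilizes $f(B)\cap n$ on all $B\in\mathcal{U}$ with $B\cap k_n=s$ and $B$ below a fixed $A_n$, then uses the p-point property to diagonalize. The paper's route to the full statement is slightly different from yours: it observes (see the Remark after Theorem~\ref{thm.PropB}) that the cofinal set on which $f$ becomes basic can be taken of the simple form $\mathcal{D}=\mathcal{U}\cap\mathcal{P}(\tilde X)$ for a single $\tilde X\in\mathcal{U}$, and then invokes Theorem~\ref{thm.PropACtsMaps} to manufacture the global continuous monotone extension $\tilde f:\mathcal{P}(\om)\to\mathcal{P}(\om)$. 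You instead build $\tilde f$ directly via the finite values $t^n_s$ and work with a more complicated cofinal set $\mathcal{X}$. Both are legitimate; the paper's decomposition has the advantage that the extension step (Theorem~\ref{thm.PropACtsMaps}) is reusable, while your direct construction is more self-contained.

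There is, however, a genuine gap in your cofinality argument for $\mathcal{X}$ (and in the parallel claim that one can shrink $A^*$ so that $A^*\setminus k_n\sse A_n$ for all $n$). You assert that, for a pseudo-intersection $B'$ of $\{Y\}\cup\{A_n\}$, the condition $B'\setminus k_n\sse A_n$ fails for only finitely many $n$. This need not hold: since $A_{n-1}\setminus A_n\sse[k_n,\infty)$ by your construction, every element of $B'\cap(A_{n-1}\setminus A_n)$ automatically lies in $[k_n,\infty)$ and witnesses failure at level $n$; nothing prevents this from happening for infinitely many $n$. Concretely, if $k_n=n$ and $B'\setminus A_n$ contains some $m\ge n$ for each $n$ (which is perfectly consistent with $B'\sse^*A_n$), then every level fails. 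The fix is standard: after obtaining $A^*$, pass to a subsequence $n_0<n_1<\cdots$ with $A^*\setminus k_{n_{j+1}}\sse A_{n_j}$ (possible since $A^*\sse^*A_{n_j}$ and $k_n\to\infty$), and run the rest of the argument along the levels $k_{n_j}$; your coherence relation $t^{n+1}_{s'}\cap n=t^n_{s'\cap k_n}$ survives restriction to a subsequence, so $\tilde f$ remains basic. Alternatively, this is exactly what Theorem~\ref{thm.PropACtsMaps} is designed to absorb once you have basicness on a set of the form $\mathcal{U}\cap\mathcal{P}(\tilde X)$.
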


The existence of continuous cofinal maps is useful for analyzing properties of p-points added by $\sigma$-closed forcings.
They are also used in 
the following theorem, which reveals the surprising fact that the Tukey and Rudin-Keisler orders sometimes coincide.
Recall that $\le_{RB}$ is the Rudin-Blass ordering, which implies $\le_{RK}$.

\begin{thm}[Raghavan \cite{Raghavan/Todorcevic11}]\label{thm.RT}
Let $\mathcal{U}$ be any ultrafilter and let $\mathcal{V}$ be a q-point.
Suppose $f:\mathcal{U}\ra\mathcal{V}$ is continuous, monotone, and cofinal in $\mathcal{V}$.
Then  $\mathcal{V}\le_{RB}\mathcal{U}$.
\end{thm}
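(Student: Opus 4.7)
The aim is to produce a finite-to-one $g:\om\to\om$ such that $g_*(\mathcal{U})=\mathcal{V}$; this is exactly the assertion $\mathcal{V}\le_{RB}\mathcal{U}$. The plan is to extract $g$ from the ``continuity data'' of $f$ and then invoke the q-point property of $\mathcal{V}$ once, to ensure finite-to-oneness of the resulting assignment.

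I would first read off a forcing relation from the continuity of $f$. For $m\in\om$ and $t\sse\{0,\dots,m-1\}$, set $[t,m]:=\{Z\in 2^{\om}:Z\cap m=t\}$, and write $(t,m)\forces n$ iff $[t,m]\cap\mathcal{U}\ne\emptyset$ and $n\in f(Z)$ for every $Z\in[t,m]\cap\mathcal{U}$. Continuity then supplies, for each $Y\in\mathcal{U}$ and each $n\in f(Y)$, a least $m(n,Y)$ with $(Y\cap m(n,Y),m(n,Y))\forces n$, and one obtains the representation $f(Y)=\{n:\exists m\,(Y\cap m,m)\forces n\}$. Monotonicity of $f$ ensures that forcing is well behaved under extension.

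Next I would fix any $Y_0\in\mathcal{U}$, enumerate $Y_0=\{y_0<y_1<\cdots\}$, set $V_0:=f(Y_0)\in\mathcal{V}$, and for each $n\in V_0$ let $\ell(n)$ denote the unique $i$ with $y_i<m(n,Y_0)\le y_{i+1}$. This gives a partition $\{\ell^{-1}(\{i\}):i<\om\}$ of $V_0$. The technical crux is to show that (after possibly shrinking $Y_0$ inside $\mathcal{U}$) each fiber $\ell^{-1}(\{i\})$ is finite: minimality of $m(n,Y_0)$ forces the ``new'' forcing witness for $n$ to appear in the finite gap $(y_i,y_{i+1}]$, and a bookkeeping argument against the finitely many possible forcing patterns on this gap confines the fiber. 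Granting this, the q-point property of $\mathcal{V}$, applied to the partition of $V_0$ by $\ell$, yields $V^*\in\mathcal{V}$ with $V^*\sse V_0$ and $|V^*\cap\ell^{-1}(\{i\})|\le 1$ for every $i$. I would then define $g:\om\to\om$ by $g(y_{i+1})=n$ when $V^*\cap\ell^{-1}(\{i\})=\{n\}$, and $g(k)=0$ otherwise.

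Finally, $g$ is finite-to-one (each nonzero value is attained at a unique $y_{i+1}\in Y_0$), and $g_*(\mathcal{U})=\mathcal{V}$ would follow from: for every $V\in\mathcal{V}$, $g^{-1}(V)\in\mathcal{U}$. Given such $V$, cofinality of $f$ produces $X\in\mathcal{U}$ with $f(X)\sse V$; combining monotonicity of $f$ with the forcing-witness structure should show that for $y_{i+1}\in X\cap Y_0$ with $g(y_{i+1})\ne 0$, the value $g(y_{i+1})$ belongs to $V^*\cap f(X)\sse V$, so $g^{-1}(V)$ contains a cofinal trace of $X\cap Y_0$ and therefore lies in $\mathcal{U}$. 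I expect the main obstacle to be precisely the finite-fiber claim above: continuity localizes witnesses but does not obviously bound their count above each level, and converting a ``$\mathcal{V}$-small error'' into genuine finiteness of $\ell^{-1}(\{i\})$ appears to require a delicate refinement inside $\mathcal{U}$ tailored to the q-point structure of $\mathcal{V}$, which is the one place where q-point (rather than p-point or a weaker regularity) is indispensable.
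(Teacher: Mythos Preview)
This theorem is not proved in the paper; it is quoted, with attribution to Raghavan in \cite{Raghavan/Todorcevic11}, purely as motivation for studying continuous cofinal maps. There is therefore no in-paper argument to compare your proposal against.

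On its own merits, your proposal has two genuine gaps. First, the finite-fiber claim that you yourself flag as the crux is not established, and in fact fails for the naive choice $Y_0=\omega$: take $\mathcal{U}=\mathcal{V}$ a q-point, fix an infinite $E\notin\mathcal{V}$ with $0\notin E$, and define $f(X)=X\cup E$ when $0\in X$ and $f(X)=X$ otherwise. This $f$ is monotone, continuous, maps $\mathcal{U}$ into $\mathcal{V}$, and is cofinal (for $V\in\mathcal{V}$ take $X=V\setminus\{0\}$), yet every $n\in E$ has $m(n,\omega)=1$, so $\ell^{-1}(\{0\})\supseteq E$ is infinite. Your parenthetical about ``possibly shrinking $Y_0$'' happens to rescue this particular example (remove $0$ from $Y_0$), but you give no argument that a suitable shrinking exists in general, and since the q-point hypothesis on $\mathcal{V}$ only applies to partitions into \emph{finite} pieces, you cannot invoke it until this is settled.

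Second, your verification that $g_*(\mathcal{U})=\mathcal{V}$ does not go through. You need $g(y_{i+1})=n\in f(X)$ whenever $y_{i+1}\in X\cap Y_0$, but the witness $(Y_0\cap m(n,Y_0),\,m(n,Y_0))\Vdash n$ only yields $n\in f(Z)$ for $Z$ containing $Y_0\cap m(n,Y_0)\subseteq\{y_0,\dots,y_i\}$; knowing merely that the single point $y_{i+1}$ lies in $X$ says nothing about whether $\{y_0,\dots,y_i\}\subseteq X$. (A minor further slip: your $g$ sends every element outside the relevant $y_{i+1}$'s to $0$, so $g^{-1}(\{0\})$ is infinite and $g$ is not finite-to-one; this is easily repaired, but the Rudin--Blass definition does require it.)
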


The purpose of this paper is to show that the property of having basic Tukey reductions is inherited under Tukey reducibility, and that, assuming the existence of p-points,
 there is a wide class of ultrafilters, and more general $\vec{\mathcal{U}}$-tree spaces, which have basic Tukey reductions.

\begin{thmMT}\label{thm.main}
If $\mathcal{U}$ is Tukey reducible to a  p-point (or a stable-ordered union ultrafilter),
then $\mathcal{U}$ has basic Tukey reductions.
\end{thmMT}

We remark that the property of having basic Tukey reductions is the only property yet known to be inherited under Tukey reducibility, whereas many standard properties, such as being a  p-point or selective, are inherited under Rudin-Keisler reducibility but not under Tukey reducibility.

In the next section we will prove the Main Theorem.  
In Section \ref{sec.Fubit}, we
introduce the notion of $\vec{\mathcal{U}}$-trees, and 
alert the reader to the connection between countable iterations of Fubini products of ultrafilters and  corresponding  $\vec{\mathcal{U}}$-trees.
Then
we show that every countable iteration of Fubini products of p-points has basic Tukey reductions in the topological space of  $\vec{\mathcal{U}}$-trees. 
That countable iterations of Fubini products of p-points (or stable ordered-union ultrafilters on $\FIN$) come into play is not surprising.
It is a theorem of Todorcevic in \cite{Raghavan/Todorcevic11} that whenever $\mathcal{U}$ is selective and $\mathcal{V}\le_T\mathcal{U}$, then $\mathcal{V}$ is Rudin-Keisler equivalent to a countable iteration of Fubini products of $\mathcal{U}$.
Recently, similar results  were found to hold for 
weakly Ramsey ultrafilters and the more general class of  ultrafilters $\mathcal{U}_{\al}$ ($\al<\om_1$) introduced and investigated by Laflamme in \cite{Laflamme89}.
 (See the forthcoming \cite{Dobrinen/Todorcevic11} for more details.)
\vskip.1in

\it Acknowledgments. 
\rm
We  thank Stevo Todorcevic for generous sharing of his knowledge, without which this paper would not exist.
We thank the referee for insightful comments, which have improved the exposition of this paper.


\section{Basic Tukey reductions are preserved under Tukey reducibility}\label{sec.pres}

In this section we present a sufficient condition which  guarantees that the property of having 
continuous cofinal maps is inherited under Tukey reducibility.
The condition is that of having basic Tukey reductions.
As a particular consequence, we obtain the Main Theorem.

We use $2^{<\om}$ to denote the collection of finite sequences  $s:n\ra 2$, for $n<\om$. 
For $s,t\in 2^{<\om}$,
we write $s\sqsubseteq t$ to denote that $\dom(s)\sse\dom(t)$ and that 
$t\re\dom(s)=s$;
in other words, $s$ is an initial segment of $t$.
We also use $a\sqsubseteq X$ for sets $a,X\sse\om$ to denote that, given their strictly increasing enumerations, $a$  is an initial segment of $X$.

We would like to think of  $s$ as identified with the set for which it is the characteristic function. 
Of course, since one set  determines  different characteristic functions on different domains, we take the slightly tedious but unambiguous road of differentiating between a set and its characteristic function on a given domain.
Thus,
for $s\in 2^m$, $s^{-1}(\{1\})$ is the subset of $m$ for which $s$ is the characteristic function.
Since this notation would be cumbersome, 
we shall instead use $d(s)$ to denote $s^{-1}(\{1\})$.
We think of $d(s)$ as the portion of the {\em domain of $s$ which gets mapped to $1$}, which is exactly the set for which $s$ is a characteristic function.

Let $D$ be a subset of $2^{<\om}$.
We shall call a map $\hat{f}:D\ra 2^{<\om}$ \em level preserving \rm if 
there is a strictly increasing sequence  $(k_m)_{m<\om}$ such that 
$\{k_m:m<\om\}=\{|s|:s\in D\}$ 
and
for each  $s\in D\cap 2^{k_m}$, we have that $\hat{f}(s)\in 2^m$.
A level preserving map $\hat{f}$ is \em initial segment preserving \rm if for each $m<m'$, each $s\in D\cap 2^{k_m}$ and each $s'\in D\cap 2^{k_{m'}}$ such that $s\sqsubseteq s'$, then also $\hat{f}(s)\sqsubseteq\hat{f}(s')$.
A map $\hat{f}:2^{<\om}\ra 2^{<\om}$ is 
{\em monotone} if for each $s,t\in 2^{<\om}$ such that $d(s)\sse d(t)$, we have $d(\hat{f}(s))\sse d(\hat{f}(t))$.

\begin{defn}\label{defn.basic}
A monotone map $f$ on a subset $\mathcal{D}\sse \mathcal{P}(\om)$ is said to be {\em basic} iff $f$ is 
generated  by a monotone, level and initial segment preserving map.
This means that 
 there is some  strictly
increasing sequence $(k_m)_{m<\om}$ such that,
letting 
\begin{equation}
D=\bigcup_{m<\om}\{s\in 2^{k_m}: \exists X\in \mathcal{D}\, (d(s)=X\cap k_m)\},
\end{equation}
there is a 
 level and initial segment preserving map
$\hat{f}:D\ra 2^{<\om}$ 
such that 
for each $X\in\mathcal{D}$,
\begin{equation} 
f(X)=\bigcup\{d(\hat{f}(s)):s\in D \mathrm{\ and\ } d(s)\sqsubseteq X\}.
\end{equation}
In this case, we will also say that $\hat{f}$ \em generates \rm $f$.
\end{defn}

Note that 
 $\hat{f}$ being initial segment preserving implies that
\begin{equation}
\mathrm{ for\ each\ }m<m' \mathrm{\ and\ }s\in D\cap 2^{k_{m'}},\ 
 \hat{f}(s\re k_m)=\hat{f}(s)\re m.
\end{equation}
Thus, a basic map $f$ on $\mathcal{D}$ is  continuous on $\mathcal{D}$, and furthermore, generates a continuous map on $\overline{\mathcal{D}}$, the closure of $\mathcal{D}$ in $\mathcal{P}(\om)$.

We begin with a theorem showing that any basic cofinal map from some cofinal subset of an ultrafilter $\mathcal{U}$  into another ultrafilter $\mathcal{V}$  can be extended to a basic map on the whole space $\mathcal{P}(\om)$ in such a way that its restriction to $\mathcal{U}$ is a continuous cofinal map.

\begin{thm}\label{thm.PropACtsMaps}
Suppose $\mathcal{U}$ and $\mathcal{V}$ are  ultrafilters,
$f:\mathcal{U}\ra\mathcal{V}$
is a  monotone cofinal map, and
there is a cofinal subset $\mathcal{D}\sse\mathcal{U}$ such that
  $f\re\mathcal{D}$ is basic.
Then there is a continuous, monotone $\tilde{f}:\mathcal{P}(\om)\ra\mathcal{P}(\om)$ 
such that
\begin{enumerate}
\item
 $\tilde{f}$
is basic;
\item
 $\tilde{f}\re\mathcal{D}= f\re\mathcal{D}$; and
 \item $\tilde{f}\re\mathcal{U}:\mathcal{U}\ra\mathcal{V}$ is a cofinal map.
\end{enumerate}
Thus, if  $\mathcal{U}$ has the property that for every $\mathcal{V}\le_T\mathcal{U}$ and every monotone cofinal map $f:\mathcal{U}\ra\mathcal{V}$ there is some cofinal $\mathcal{D}\sse\mathcal{U}$ for which $f\re\mathcal{D}$ is basic,
then $\mathcal{U}$ has basic Tukey reductions.
\end{thm}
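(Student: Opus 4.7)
The plan is to use the assumption that $f\re\mathcal{D}$ is basic to construct $\tilde{f}$ explicitly as a basic map on all of $\mathcal{P}(\om)$, with $\tilde{f}\re\mathcal{D}=f\re\mathcal{D}$ built in. Fix the strictly increasing sequence $(k_m)_{m<\om}$ and the generating map $\hat{f}:D\ra 2^{<\om}$ supplied by the basicness of $f\re\mathcal{D}$, where $D$ is the associated domain.

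First I would extend $\hat{f}$ to a map $\hat{\tilde{f}}$ defined on the whole of $\bigcup_{m<\om}2^{k_m}$ by
$$
d(\hat{\tilde{f}}(s)) \; := \; \bigcup\{d(\hat{f}(t)):\ t\in D,\ |t|\le k_m,\ d(t)\sse d(s)\}
$$
for each $s\in 2^{k_m}$, and then define $\tilde{f}(X):=\bigcup_{m<\om}d(\hat{\tilde{f}}(X\cap k_m))$ for every $X\in\mathcal{P}(\om)$, where $X\cap k_m$ is read as its characteristic function in $2^{k_m}$. Once $\hat{\tilde{f}}$ is shown to be monotone, level, and initial segment preserving, $\tilde{f}$ is automatically a basic map on $\mathcal{P}(\om)$, hence continuous and monotone, securing (1). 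Level preservation is immediate from that of $\hat{f}$, and $\hat{\tilde{f}}\re D=\hat{f}$ follows by taking $t=s$ in the union and using monotonicity of $\hat{f}$ to dominate every other term; this yields (2).

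The hard part will be monotonicity and initial segment preservation of $\hat{\tilde{f}}$ in the nontrivial direction, namely when $|s|=k_m>k_{m'}=|s'|$ but $d(s)\sse d(s')$ (and analogously for $\sqsubseteq$). Because $d(s')\sse k_{m'}$, any $t\in D$ contributing to the union for $s$ with $k_{m'}<|t|\le k_m$ in fact has $d(t)\sse d(s)\sse k_{m'}$; hence the restriction $t\re k_{m'}$ still lies in $D$ (which is closed under restriction to a lower $k$-level) and satisfies $d(t\re k_{m'})=d(t)$. Monotonicity of $\hat{f}$ applied in both directions then forces $d(\hat{f}(t))=d(\hat{f}(t\re k_{m'}))\sse m'$, so that the contribution is already captured by the union defining $d(\hat{\tilde{f}}(s'))$ via the witness $t\re k_{m'}$. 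The same manoeuvre, combined with initial segment preservation of $\hat{f}$, produces initial segment preservation for $\hat{\tilde{f}}$.

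For (3), note that $\tilde{f}$ is monotone on $\mathcal{P}(\om)$ and agrees with $f$ on the cofinal set $\mathcal{D}$. Given $V\in\mathcal{V}$, cofinality of $f$ yields $U_0\in\mathcal{U}$ with $f(U_0)\sse V$, and cofinality of $\mathcal{D}$ in $\mathcal{U}$ produces $X\in\mathcal{D}$ with $X\sse U_0$; then $\tilde{f}(X)=f(X)\sse f(U_0)\sse V$ by monotonicity of $f$, which is a cofinal witness. For arbitrary $U\in\mathcal{U}$, picking $X\in\mathcal{D}$ with $X\sse U$ gives $\tilde{f}(U)\contains\tilde{f}(X)=f(X)\in\mathcal{V}$, so $\tilde{f}(U)\in\mathcal{V}$ by upward closure of the filter. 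The final sentence of the theorem is then immediate, since the hypothesis on $\mathcal{U}$ supplies, for each monotone cofinal $f:\mathcal{U}\ra\mathcal{V}$, a cofinal $\mathcal{D}$ on which $f\re\mathcal{D}$ is basic, and the construction above returns an extension $\tilde{f}$ meeting exactly the requirements in the definition of basic Tukey reductions.
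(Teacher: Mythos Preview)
Your proposal is correct and follows essentially the same route as the paper. Your map $\hat{\tilde{f}}$ is exactly the paper's extension $\hat{g}$, defined on all of $\bigcup_m 2^{k_m}$ by taking, for $s\in 2^{k_m}$, the union of $d(\hat f(t))$ over all $t\in D$ of level $\le k_m$ with $d(t)\subseteq d(s)$. The only cosmetic difference is that the paper first introduces an intermediate map $f'(U)=\bigcup\{f(X):X\in\mathcal D,\ X\subseteq U\}$ and then checks $\tilde f\re\mathcal U=f'$, whereas you verify items (2) and (3) directly from monotonicity and the identity $\hat{\tilde f}\re D=\hat f$; both arguments are equivalent. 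Your handling of monotonicity across levels (reducing a long $t$ with $d(t)\subseteq k_{m'}$ to $t\re k_{m'}$ via monotonicity of $\hat f$ in both directions) is a slight variant of the paper's computation using initial-segment preservation, but it is valid.
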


\begin{proof}
We first extend
 the basic map  $f\re\mathcal{D}$ 
 to a map on all of $\mathcal{U}$.
Define $f'$ on $\mathcal{U}$ by 
\begin{equation}
f'(U)=\bigcup\{f(X):X\in\mathcal{D}\mathrm{\ and\ } X\sse U\},
\end{equation}
for each $U\in\mathcal{U}$.

\begin{claimn}\label{claim.A1}
$f'$ is a monotone cofinal map from $\mathcal{U}$ into $\mathcal{V}$, and 
$f'\re\mathcal{D}=f\re\mathcal{D}$.
\end{claimn}

\begin{proof}
Let $U\in\mathcal{U}$.
Then $f'(U)$ is a union of elements in $\mathcal{V}$, hence is itself in $\mathcal{V}$.
By its definition, it is easy to see that $f'$ is monotone.
Given any $X\in\mathcal{D}$, by definition $f'(X)\contains f(X)$.
Since $f$ is monotone, for each $X'\in \mathcal{D}$ such that $X'\sse X$, $f(X')\sse f(X)$. 
Thus, $f'(X)\sse f(X)$.
Hence, $f'(X)=f(X)$ for all $X\in\mathcal{D}$.
Moreover, the image of $\mathcal{U}$ under $f'$ contains the image of $\mathcal{D}$ under $f$, which is cofinal in $\mathcal{V}$.
Hence, $f'$ is a monotone cofinal map from $\mathcal{U}$ into $\mathcal{V}$.
\end{proof}

Let $\hat{f}$ be an initial segment and level preserving map which generates $f\re\mathcal{D}$.
Thus, there is a strictly increasing sequence $(k_m)_{m<\om}$ such that,
letting 
$D=\bigcup_{m<\om}\{s\in 2^{k_m}: \exists X\in \mathcal{D}\, (d(s)=X\cap k_m)\}$,
$\hat{f}:D\ra 2^{<\om}$ and 
 for each $s\in D\cap 2^{k_m}$,
 $\hat{f}(s)\in 2^m$.
Then  
  for each $m<m'$ and $s\in D\cap 2^{k_{m'}}$,
 $\hat{f}(s\re k_m)=\hat{f}(s)\re m$; and 
 for each $X\in\mathcal{D}$, 
\begin{equation} 
f(X)=\bigcup\{d(\hat{f}(s)):s\in D\mathrm{\ and\ }d(s)\sqsubseteq X\}.
\end{equation}

\begin{claimn}\label{claim.A2}
There is a monotone, level and  initial segment preserving map $\hat{g}$ which  generates a function $\tilde{f}:\mathcal{P}(\om)\ra\mathcal{P}(\om)$ such that  $\tilde{f}\re\mathcal{U}=f'$.
\end{claimn}

\begin{proof}
Note that for each $m<\om$,
\begin{equation}
\{t\in 2^{k_m}:\exists s\in D\cap 2^{k_m}(d(s)\sse d(t))\}=2^{k_m},
\end{equation}
since every finite sequence of zeros of length $2^{k_m}$ for some $m$
is in $D$ since $\mathcal{D}$ is cofinal in $\mathcal{U}$.
Let $C=\bigcup_{m<\om}2^{k_m}$.
Define $\hat{g}$ on $C$ as follows.
For $t\in 2^{k_m}$, define
$\hat{g}(t)$ to be the characteristic function with domain $m$ of the set 
\begin{equation}
\bigcup\{d(\hat{f}(s)):s\in D\cap \bigcup_{n\le m}2^{k_n}\mathrm{\ and\ }d(s)\sse d(t)\}.
\end{equation}
Abusing terminology and confusing sets with their characteristic functions in this sentence,
the idea behind $\hat{g}(t)$ is that it codes 
 the union of all  sets 
which  are $\hat{f}$-images 
 of sets
 in the domain of $\hat{f}$
  contained within $t$.
By its definition, $\hat{g}$ is monotone and takes each $k_m$-th level to the $m$-th level.
So $\hat{g}$ is level preserving.

To see that $\hat{g}$ is initial segment preserving, let $t\sqsubset t'$, where $t\in 2^{k_m}$ and $t'\in 2^{k_{m'}}$ for some $m<m'$.
For each $s\in D\cap \bigcup_{n\le m'}2^{k_n}$ with $d(s)\sse d(t')$, 
we have that $d(s\re k_m)\sse d(t)$.
So
\begin{align}
d(\hat{g}(t')\re m)
&=
\bigcup\{d(\hat{f}(s)):s\in D\cap \bigcup_{n\le 
m'}2^{k_n}\mathrm{\ and\  }s\sse s'\}\cap m\\
&=
\bigcup\{d(\hat{f}(s)\re m): s\in D\cap \bigcup_{n\le m'}2^{k_n}\mathrm{\ and\ }d(s)\sse d(t')\}\\
&=
\bigcup\{d(\hat{f}(s\re k_m)): s\in D\cap \bigcup_{n\le m'}2^{k_n}\mathrm{\ and\ }d(s)\sse d(t')\}\\
&=d(\hat{g}(t)).
\end{align}
Since $m=\dom(\hat{g}(t))$, we have that 
$\hat{g}(t')\re m=\hat{g}(t)$.
Therefore, $\hat{g}(t)\sqsubset \hat{g}(t')$.

Now define $\tilde{f}:\mathcal{P}(\om)\ra\mathcal{P}(\om)$ by
\begin{equation}
\tilde{f}(Z)
=\bigcup\{d(\hat{g}(t)):\exists m<\om\, ( t\in 2^{k_m}\mathrm{\ and \ } d(t)\sqsubseteq Z)\}.
\end{equation}
Then, by its definition,
$\tilde{f}$ is generated by the level and initial segment preserving map $\hat{g}$.
Note that $\tilde{f}$ is monotone and continuous just by virtue of its definition,
and that 
\begin{equation}
\tilde{f}(Z)=\bigcup\{d(\hat{g}(t)):\exists m<\om\, ( t\in 2^{k_m}\mathrm{\ and\ }d(t)\sse Z)\},
\end{equation}
 since $\hat{g}$ is monotone.

Lastly, we check that $\tilde{f}\re\mathcal{U}= f'$.
Let $U\in\mathcal{U}$.
\begin{align}
f'(U)
&=
\bigcup\{d(\hat{f}(s)):\exists m<\om\, (s\in 2^{k_m}\mathrm{\ and \ }\exists X\in\mathcal{D}(d(s)\sqsubseteq X\sse U))\}\\
&=
\bigcup\{d(\hat{f}(s)):s\in D\mathrm{\ and\ }d(s)\sse U\}.
\end{align}
At the same time,
\begin{align} 
\tilde{f}(U)
&=\bigcup\{d(\hat{g}(t)):\exists m<\om\, (t\in 2^{k_m}\mathrm{\ and \ }d(t)\sse U)\}\\
&=
\bigcup\{d(\hat{f}(s)):s\in D\mathrm{\ and\ }d(s)\sse U\}.
\end{align}
Therefore, $\tilde{f}(U)=f'(U)$. 
\end{proof}

Since $\tilde{f}\re\mathcal{U}=f'$,
we have the conclusion of the theorem.
\end{proof}

In the next theorem, we give a condition under which the property of supporting continuous cofinal maps gets inherited under Tukey reducibility.

\begin{thm}\label{thm.PropB}
Let $\mathcal{U}$ be an ultrafilter such that whenever $f:\mathcal{U}\ra\mathcal{V}$ is a monotone cofinal function,
then there exists a cofinal $\mathcal{D}\sse\mathcal{U}$ such that 
$f\re\mathcal{D}$ is basic.
Then for every ultrafilter $\mathcal{W}\le_T\mathcal{U}$,
$\mathcal{W}$ has basic Tukey reductions.
\end{thm}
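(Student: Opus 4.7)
The plan is to show that $\mathcal{W}$ has basic Tukey reductions by invoking Theorem \ref{thm.PropACtsMaps} applied to $\mathcal{W}$: by that theorem, it suffices to exhibit, for every monotone cofinal $g:\mathcal{W}\ra\mathcal{V}$, a cofinal subset $\mathcal{X}\sse\mathcal{W}$ on which $g\re\mathcal{X}$ is basic. The strategy will be to transport $g$ to a monotone cofinal map on $\mathcal{U}$ via a Tukey witness, apply the hypothesis on $\mathcal{U}$ there, and then pull the basic structure back to $\mathcal{W}$.

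First I will fix a monotone cofinal $h:\mathcal{U}\ra\mathcal{W}$ witnessing $\mathcal{W}\le_T\mathcal{U}$. Applying the hypothesis and then Theorem \ref{thm.PropACtsMaps} to $h$, I may assume that $h=\tilde{h}\re\mathcal{U}$ for some basic continuous monotone $\tilde{h}:\mathcal{P}(\om)\ra\mathcal{P}(\om)$, generated by some level and initial-segment preserving $\hat{h}$ with level sequence $(k^h_m)_{m<\om}$. Given the monotone cofinal $g:\mathcal{W}\ra\mathcal{V}$, I will form the composition $f:=g\circ\tilde{h}\re\mathcal{U}:\mathcal{U}\ra\mathcal{V}$, which is again monotone cofinal. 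By the hypothesis applied to $f$, there will be a cofinal $\mathcal{D}'\sse\mathcal{U}$ on which $f$ is basic, with generating map $\hat{f}$ and level sequence $(k^f_n)_{n<\om}$. I then take $\mathcal{X}:=\tilde{h}[\mathcal{D}']$; the cofinalities of $\tilde{h}\re\mathcal{U}$ and of $\mathcal{D}'$ in $\mathcal{U}$ make $\mathcal{X}$ cofinal in $\mathcal{W}$, and by construction $g(\tilde{h}(X))=f(X)$ for each $X\in\mathcal{D}'$.

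The central step, and the main obstacle, will be to produce a level and initial-segment preserving $\hat{g}$ generating $g\re\mathcal{X}$. The natural plan is to select a level sequence $(\ell_n)_{n<\om}$ large enough that, for each $Y=\tilde{h}(X)\in\mathcal{X}$, the initial segment $Y\cap\ell_n$ determines the value $f(X)\cap n$, and to set $\hat{g}(Y\cap\ell_n)$ equal to the characteristic function of that value on $n$. The difficulty is that $\hat{h}$ need not be injective on any given level: distinct $X_1,X_2\in\mathcal{D}'$ with $\tilde{h}(X_1)\cap\ell_n=\tilde{h}(X_2)\cap\ell_n$ may have $\hat{f}(X_1\cap k^f_n)\neq\hat{f}(X_2\cap k^f_n)$, making $\hat{g}$ a priori ill-defined. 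The key resource for resolving this is that $g$ is a well-defined function on $\mathcal{W}$: whenever $\tilde{h}(X_1)=\tilde{h}(X_2)$, necessarily $f(X_1)=f(X_2)$. I expect to exploit this via a diagonal refinement of $\mathcal{D}'$, using that $\hat{f}$ takes only finitely many values on each level and that restricting $\mathcal{D}'$ to a cofinal sub-collection preserves the basic-ness of $f\re\mathcal{D}'$, so that eventually the $\ell_n$-level equivalence induced by $\tilde{h}$ refines the $n$-level equivalence induced by $\hat{f}$. Once $\hat{g}$ is in hand, a final application of Theorem \ref{thm.PropACtsMaps}, now to $\mathcal{W}$, will complete the proof.
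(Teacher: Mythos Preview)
Your setup matches the paper's: fix a basic cofinal $\tilde h\re\mathcal U:\mathcal U\ra\mathcal W$, compose with the given $g:\mathcal W\ra\mathcal V$ to obtain a monotone cofinal $f:\mathcal U\ra\mathcal V$, apply the hypothesis to get $f\re\mathcal D'$ basic, and push forward $\mathcal X=\tilde h[\mathcal D']$ as the candidate cofinal set in $\mathcal W$. You also correctly isolate the central difficulty: two elements of $\mathcal D'$ with the same $\tilde h$-image up to level $\ell_n$ need not have the same $\hat f$-value at level $n$.

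The gap is in your proposed resolution. A ``diagonal refinement of $\mathcal D'$'' is not what is needed here and, as stated, does not work. Thinning $\mathcal D'$ does not resolve the fiber ambiguity: for any cofinal sub-collection you keep, the same conflict can recur at higher levels, and there is no finite-injury bookkeeping that terminates. Moreover, the observation that $\tilde h(X_1)=\tilde h(X_2)\Rightarrow f(X_1)=f(X_2)$ only controls \emph{full} agreement; to get the level-wise statement you need, a compactness argument is unavoidable, and the limits one takes land in $\overline{\mathcal D'}$, typically outside $\mathcal D'$, so the well-definedness of $g$ on $\mathcal W$ no longer applies directly.

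The paper handles this differently and more carefully. First, it extends the given map $g:\mathcal W\ra\mathcal V$ to all of $\mathcal P(\om)$ by the intersection formula $\tilde g(X)=\bigcap\{g(W):W\in\mathcal W,\ W\contains X\}$ \emph{before} composing, so that the composite is defined on $\overline{\mathcal D'}$. Second, it \emph{defines} the finite approximation $\hat g(t)$ of $g$ (your $\hat g$) as the intersection $\bigcap\{d(\hat f(s)):\hat h(s)=t\}$ over all preimages at the given level, rather than trying to pick a single preimage. Third --- and this is the real content --- it runs a compactness argument on $\overline{\mathcal D'}$ (the paper's Claims 1--3) to produce a strictly increasing sequence of levels $(j_{\tilde m})$ with the property that for every $Y\in\overline{\mathcal X}$, the value $\tilde g(Y)\cap\tilde m$ is recovered exactly by $d(\hat g(t))\cap\tilde m$ where $t$ codes $Y\cap j_{\tilde m}$. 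Only after passing to this subsequence of levels does the truncated map $\hat i(t)=\hat g(t)\re\tilde m$ become level and initial-segment preserving and generate $g\re\mathcal X$. No refinement of $\mathcal D'$ takes place.

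In short: replace the diagonal-refinement plan with the intersection definition of the finite approximation and a compactness argument on the closure $\overline{\mathcal D'}$; this is where all the work lies.
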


\begin{proof}
Let $\mathcal{U}$ be as in the hypotheses and $\mathcal{W}\le_T\mathcal{U}$.
Let  $h:\mathcal{W}\ra\mathcal{V}$ be a monotone cofinal map.
Extend $h$ to the  monontone map $\tilde{h}:\mathcal{P}(\om)\ra\mathcal{P}(\om)$  defined as follows.
For each $X\in\mathcal{P}(\om)$, let
\begin{equation}
\tilde{h}(X)=\bigcap\{h(W):W\in\mathcal{W}\mathrm{\ and\ }W\contains X\}.
\end{equation}
Note that $\tilde{h}$ is monotone and $\tilde{h}\re\mathcal{W}=h$.
By the hypotheses and Theorem \ref{thm.PropACtsMaps}, there is a continuous monotone map $\tilde{f}:\mathcal{P}(\om)\ra\mathcal{P}(\om)$ such that 
letting $f=\tilde{f}\re\mathcal{U}$,
$f:\mathcal{U}\ra\mathcal{W}$ is a cofinal map,
 and $\tilde{f}$ 
is basic, generated by a monotone,
 level and initial segment preserving map $\hat{f}:\bigcup_{m<\om}2^{k_m}\ra 2^{<\om}$, for some strictly increasing sequence $(k_m)_{m<\om}$.

Let $\tilde{g}=\tilde{h}\circ \tilde{f}$.
Then $\tilde{g}:\mathcal{P}(\om)\ra\mathcal{P}(\om)$ and is monotone.
Letting $g=\tilde{g}\re \mathcal{U}$,
we see that
$g=h\circ f$, hence
 $g:\mathcal{U}\ra\mathcal{V}$ is a monotone cofinal map.
By the hypotheses, 
there is a cofinal subset $\mathcal{D}\sse\mathcal{U}$ 
such that 
$g\re\mathcal{D}$ is 
basic, generated by some monotone, level and initial segment preserving map $\hat{g}$.
Without loss of generality, we may assume that $\hat{f}$ and $\hat{g}$ are defined on the same set of levels $\bigcup_{m<\om}2^{k_m}$:
For if $\hat{g}$ is defined on $\bigcup_{m<\om}2^{j_m}$, take $k'_m=\max(k_m,j_m)$, and for $s\in 2^{k'_m}$, define $\hat{f}'(s)=\hat{f}(s\cap k_m)$ and $\hat{g}'(s)=\hat{g}(s\cap j_m)$.
Let 
\begin{equation}
D=\bigcup_{m<\om}\{s\in 2^{k_m}:\exists X\in\mathcal{D}(d(s)=X\cap k_m)\}.
\end{equation}
Note that for each 
$s\in D\cap 2^{k_m}$ such that $d(s)\sqsubseteq X\in\mathcal{D}$,
we have that $\hat{f}(s)=f(X)\cap m$ 
and $\hat{g}(s)=g(X)\cap m =\tilde{g}(X)\cap m$.

Let $\mathcal{Y}=f''\mathcal{D}$.  Then $\mathcal{Y}$ is cofinal in $\mathcal{W}$.
Let $\overline{\mathcal{D}}$ denote the closure of $\mathcal{D}$ in the topological space $\mathcal{P}(\om)$.
Since $f$ is continuous on the compact space $\mathcal{P}(\om)$,
$\overline{\mathcal{Y}}=\overline{f''\mathcal{D}}=f''\overline{\mathcal{D}}$.
Let $C$ be the collection of all characteristic functions of initial segments of elements of $\mathcal{Y}$.
That is, 
\begin{equation}
C=\bigcup_{m<\om}\{s\in 2^m:\exists Y\in\mathcal{Y}\, (d(s)=Y\cap m)\}.
\end{equation}
We point out that $C$ is also the collection of all
characteristic functions of
 initial segments of elements of $\overline{\mathcal{Y}}$. 
Define $\hat{h}:C\ra 2^{<\om}$ as follows: 
For  $t\in C\cap 2^{m}$, 
define $\hat{h}(t)$
to be the characteristic function with domain $m$ such that 
\begin{equation}
d(\hat{h}(t))=
\bigcap\{d(\hat{g}(s)):s\in D\cap 2^{k_m}\mathrm{\  and\ } \hat{f}(s)=t\}.
\end{equation}
Thus, $d(\hat{h}(t))\sse h(X)\cap m$.
Note that $\hat{h}$ is level preserving, just by its definition.
In fact, for each $t\in C\cap 2^m$, $\hat{h}(t)$ is also in $2^m$.

The problem with  $\hat{h}$ is that it is, a priori, neither monotone nor initial segment preserving, and it is not initially clear whether or not $\hat{h}$ generates $h$.
We will show, however, that when restricted to a certain set of levels and truncated by a certain amount, $\hat{h}$ is in fact monotone and initial segment preserving.
Towards showing such levels exist, we have the next claim.

\begin{claim1}\label{claim.Bii}
Let $Y\in\overline{\mathcal{Y}}$.
For each $\tilde{m}$ there is an $m\ge\tilde{m}$ such that for each $Z\in\overline{\mathcal{D}}$ with $\tilde{f}(Z)\cap m=Y\cap m$ there is an $X\in\overline{\mathcal{D}}$ such that $\tilde{f}(X)=Y$ and $\tilde{g}(X)\cap \tilde{m}=\tilde{g}(Z)\cap \tilde{m}$.
\end{claim1}

\begin{proof}
Let $Y\in\overline{\mathcal{Y}}$.
Suppose the claim fails.
Then
there is an $\tilde{m}$ such that for each $m\ge\tilde{m}$,
there is a $Z_m\in\overline{\mathcal{D}}$ such that $\tilde{f}(Z_m)\cap m=Y\cap m$,
but for each $X\in\overline{\mathcal{D}}$ such that $\tilde{f}(X)=Y$,
$\tilde{g}(X)\cap \tilde{m}\ne \tilde{g}(Z_m)\cap \tilde{m}$.
Since $\overline{\mathcal{D}}$ is a closed subset of $\mathcal{P}(\om)$, it is compact,
so there is a convergent subsequence $(Z_{m_i})_{i<\om}$ which converges to some $Z\in\overline{\mathcal{D}}$.
Since $\tilde{f}$ is continuous, $\tilde{f}(Z_{m_i})$ converges to $\tilde{f}(Z)$.
Since   $\tilde{f}(Z_{m_i})\cap m_i=Y\cap m_i$ for each $i$,
it follows that $\tilde{f}(Z_{m_i})$ converges to $Y$.
Therefore, $\tilde{f}(Z)=Y$.
Since $\tilde{g}$ is continuous, $\tilde{g}(Z_{m_i})$ converges to $\tilde{g}(Z)$.
But that implies that for all sufficiently large values of $i$,
$\tilde{g}(Z_{m_i})\cap \tilde{m}=\tilde{g}(Z)\cap \tilde{m}$,
contradicting that for all $m$, $\tilde{g}(Z_m)\cap \tilde{m}\ne \tilde{g}(Z)\cap \tilde{m}$.
\end{proof}

\begin{claim2}
There is a strictly increasing sequence 
$(j_{\tilde{m}})_{\tilde{m}<\om}$  such that
for each $Y\in\overline{\mathcal{Y}}$,
 $\tilde{m}<\om$, and
 $Z\in\overline{\mathcal{D}}$ with $\tilde{f}(Z)\cap j_{\tilde{m}}=Y\cap j_{\tilde{m}}$, 
there is an $X\in\overline{\mathcal{D}}$ such that $\tilde{f}(X)=Y$ and $\tilde{g}(X)\cap \tilde{m}=\tilde{g}(Z)\cap \tilde{m}$.
\end{claim2}

\begin{proof} 
For each $\tilde{m}$ and  $Y\in\overline{\mathcal{Y}}$, there is an $m(Y,\tilde{m})$ satisfying Claim 1.
The finite segments $Y\cap m(Y,\tilde{m})$ determine open sets, and the union of these open sets (over all $Y\in\overline{\mathcal{Y}}$)
 covers $\overline{\mathcal{Y}}$.
Thus, there is a finite subcover, say $Y_0\cap m(Y_0,\tilde{m}),\dots, Y_n\cap m(Y_n,\tilde{m})$.
Take $j_{\tilde{m}}\ge\max\{\tilde{m},m(Y_0,\tilde{m}),\dots,
m(Y_n,\tilde{m})\}$ so that  $(j_{\tilde{m}})_{\tilde{m}<\om}$  forms a strictly increasing sequence.
This sequence satisfies the claim.
\end{proof}

\begin{claim3}\label{claim.FactConverge}
Let $Y\in\overline{\mathcal{Y}}$ and $\tilde{m}$ be given,
 and let $t$ be the characteristic function of $Y\cap j_{\tilde{m}}$ with domain $j_{\tilde{m}}$.
Then $\tilde{h}(Y)\cap\tilde{m}=d(\hat{h}(t))\cap\tilde{m}$.
\end{claim3}

\begin{proof}
Let $Y\in\overline{\mathcal{Y}}$ and $\tilde{m}$ be given,
and let $t$ be the characteristic function of $Y\cap j_{\tilde{m}}$ with domain $j_{\tilde{m}}$.
By definition of $\hat{h}$,
\begin{equation}
d(\hat{h}(t))
=\bigcap\{d(\hat{g}(s)):s\in D\cap 2^{k_{j_{\tilde{m}}}}
\mathrm{\ and\ }\hat{f}(s)= t\}.
\end{equation}
Let $s\in D\cap 2^{k_{j_{\tilde{m}}}}$ such that $\hat{f}(s)=t$.
By Claim 2,
 there is an $X\in\overline{\mathcal{D}}$ such that $d(s)=X\cap k_{j_{\tilde{m}}}$ and $\tilde{f}(X)=Y$.
Thus,
$\tilde{h}(Y)=\tilde{g}(X)$, and
\begin{equation}
\tilde{h}(Y)\cap\tilde{m}
=
\tilde{g}(X)\cap\tilde{m}
=
d(\hat{g}(s\re k_{\tilde{m}}))
=
d(\hat{g}(s))\cap\tilde{m}.
\end{equation}
Thus, $\tilde{h}(Y)\cap\tilde{m}=d(\hat{h}(t))\cap\tilde{m}$.
\end{proof}

It follows from Claim 3 that,
 for each $Y\in\overline{\mathcal{Y}}$,
 letting $s_{\tilde{m}}$ be the characteristic function for
$Y\cap j_{\tilde{m}}$,
\begin{equation} 
\tilde{h}(Y)=\bigcup_{\tilde{m}<\om}\hat{h}(s_{\tilde{m}})\cap\tilde{m}.
\end{equation}

Finally, we define $\hat{i}$ on domain $C\cap \bigcup_{\tilde{m}<\om}2^{j_{\tilde{m}}}$
as follows.
For each $\tilde{m}<\om$ and each $t\in C\cap 2^{j_{\tilde{m}}}$,
define
\begin{equation}
\hat{i}(t)= \hat{h}(t)\re\tilde{m}.
\end{equation}

\begin{claim4}
$\hat{i}$ is a monotone, level and initial segment preserving map which generates $\tilde{h}\re\overline{\mathcal{Y}}$, and hence generates $h\re\mathcal{Y}$.
\end{claim4}

\begin{proof}
By its definition, $\hat{i}$ is level preserving.
It is monotone and initial segment preserving, since $\hat{h}$ is monontone and initial segment preserving.
Let $Y\in\overline{\mathcal{Y}}$, and for each $\tilde{m}$, let $t_{\tilde{m}}$ be the characteristic function of $Y\cap j_{\tilde{m}}$.
It follows from Claim 3 and the definition of $\hat{i}$ that 
\begin{equation}
\tilde{h}(Y)
=\bigcup_{\tilde{m}<\om}d(\hat{h}(t_{\tilde{m}}))\cap\tilde{m}
=\bigcup_{\tilde{m}<\om}d(\hat{i}(t_{\tilde{m}})).
\end{equation}
Thus, $\hat{i}$ generates $\tilde{h}\re\overline{\mathcal{Y}}$.
\end{proof}

Thus, $h\re\mathcal{Y}=\tilde{h}\re\mathcal{Y}$ is basic, generated by $\hat{i}$.
By Theorem \ref{thm.PropACtsMaps},
$h\re\mathcal{Y}$ extends to some basic map $h':\mathcal{P}(\om)\ra\mathcal{P}(\om)$ such that $h'\re\mathcal{W}\ra\mathcal{U}$ is cofinal.
\end{proof}

\begin{rem}
Every p-point satisfies the conditions of Theorem \ref{thm.PropB}
as was shown in the proof of Theorem 20 of \cite{Dobrinen/Todorcevic10}, where the cofinal set $\mathcal{D}$ there is of the simple form $\mathcal{P}(\tilde{X})\cap\mathcal{U}$ for some particular $\tilde{X}\in\mathcal{U}$.

There is a notion of ultrafilter on the base $\FIN=[\om]^{<\om}\setminus\{\emptyset\}$ 
called stable ordered-union ultrafilter, which is the analogue of p-point for ultrafilters on the base set $\FIN$.
In 
 Theorems 71 and 72 of \cite{Dobrinen/Todorcevic10},
 it was shown that for each stable ordered union ultrafilter $\mathcal{U}$, both $\mathcal{U}$ and its projection $\mathcal{U}_{\min,\max}$ support basic cofinal maps on some cofinal subset.
 Thus, by Theorem \ref{thm.PropB},
 every ultrafilter Tukey below these kinds of ultrafilters also has basic Tukey reductions.
 It is perhaps of interest that the $\mathcal{U}_{\min,\max}$ is rapid, but neither a p-point nor a q-point.
Rather than add many definitions here, we refer the interested reader to \cite{Blass87} and  
 \cite{Dobrinen/Todorcevic10}.
\end{rem}

The Main Theorem follows from Theorems \ref{thm.PropACtsMaps} and \ref{thm.PropB}, along with Theorems 20, 71, and 72 from  \cite{Dobrinen/Todorcevic10}.

\begin{thmMT}\label{thm.main}
If $\mathcal{U}$ is Tukey reducible to a  p-point (or a stable-ordered union ultrafilter),
then $\mathcal{U}$ has basic Tukey reductions.
\end{thmMT}


\section{$\vec{\mathcal{U}}$-trees of p-points have basic Tukey reductions}\label{sec.Fubit}

In this section, we alert the reader to  the connection between countable iterations of  Fubini products of ultrafilters and ultrafilters of $\vec{\mathcal{U}}$-trees.
We then show that given  any countable iteration of Fubini products of p-points, there is an isomorphic  ultrafilter of $\vec{\mathcal{U}}$-trees which  supports basic cofinal maps in the topological space where the $\vec{\mathcal{U}}$-trees exist. 
We begin with the relevant definitions and facts.

\begin{notn}\label{notation.cross}
Let $\mathcal{U}$, $\mathcal{V}$, and $\mathcal{U}_n$ ($n<\om$) be ultrafilters.
We define the notation for the following ultrafilters. 
\begin{enumerate}
\item
$\mathcal{U}\cdot\mathcal{V}= \{A\sse\om\times\om:\{i\in\om:\{j\in\om:(i,j)\in A\}\in\mathcal{V}\}\in\mathcal{U}\}$.
\item
$\lim_{n\ra\mathcal{U}}\mathcal{U}_n=\{A\sse\om\times\om:\{n\in\om:\{j\in\om:(n,j)\in A\}\in\mathcal{U}_n\}\in\mathcal{U}\}$.
\end{enumerate}
$\lim_{n\ra\mathcal{U}}\mathcal{U}_n$ is called the \em Fubini product \rm of $\mathcal{U}_n$ over $\mathcal{U}$.
\end{notn}

The Fubini product construction of ultrafilters can be iterated countably many times, each time producing an ultrafilter.
Such an ultrafilter
 can be treated  as an ultrafilter with a front as its base set, which we now make precise.

\begin{defn}[\cite{Argyros/TodorcevicBK}]\label{def.front}
A family $B$ of finite subsets of $\bN$ is called a \em front \rm if 
\begin{enumerate}
\item
$a\not\sqsubseteq b$ whenever $a\not= b$ are in $B$;
\item
$\bigcup B$ is infinite and for every infinite $X\sse\bigcup B$ there exists $b\in B$ such that $b\sqsubseteq X$.
\end{enumerate}
\end{defn}

In standard notation for fronts, we use $\bN$ to denote $\om$.
We use $\bN^{[k]}$ to denote the collection of $k$-element subsets of $\om$.
Naturally then, $\bN^{[<k]}$ denotes the collection of subsets of $\bN$ of size less than $k$,
$\bN^{[\le k]}$ denotes the collection of subsets of $\bN$ of size less than or equal to $k$, and
$\bN^{[<\infty]}$ denotes the collection of finite subsets of $\bN$.
It is easy to check that for each $k<\om$, $\bN^{[k]}$ is a front.

Every front  is lexicographically well-ordered, and hence has a unique rank associated with it, namely the ordinal length of its lexicographical well-ordering.
For example, $\rank(\{\emptyset\})=1$,  $\rank(\bN^{[1]})=\om$, and $\rank(\bN^{[2]})=\om\cdot\om$. 
Suppose $B$ is a front and $M=\bigcup B$, so $B$ is a front on $M$.
For each $n\in M$, define $B_n=\{b\in B:n=\min(b)\}$,
and define $B_{\{n\}}=\{b\setminus \{n\}:b\in B_n\}$.
Then $B=\bigcup_{n\in M} B_n$, and each $B_n=\{\{n\}\cup a:a\in B_{\{n\}}\}$.
For each $n\in M$,
$B_{\{n\}}$ is a front on $M\setminus (n+1)$ with rank strictly less than the rank of $B$.
(See \cite{Argyros/TodorcevicBK}.)

Given any front $B$,
let $C=C(B)$ denote the collection of all proper initial segments of elements of $B$;
that is, $C=\{c\in\bN^{[<\infty]}:\exists \in B\, (c\sqsubset b)\}$.
Let $D=B\cup C$.
Notice that $D$ forms a tree under the partial ordering of initial segments.

\begin{defn}[\cite{TodorcevicBK10}]\label{def.Utree}
Given a front $B$ and  a sequence $\vec{\mathcal{U}}=(\mathcal{U}_c:c\in C)$ of nonprincipal ultrafilters on $\om$,
a {\em $\vec{\mathcal{U}}$-tree} is a tree $T\sse D$ with the property that $\{n:\in\om:c\cup \{n\}\in T\}\in\mathcal{U}_c$ for all $c\in C$.
\end{defn}

\begin{notn}\label{notn.Utrees}
Given a front $B$ and a sequence $\vec{\mathcal{U}}=(\mathcal{U}_c:c\in C)$ of nonprincipal ultrafilters on $\om$,
let $\mathfrak{T}=\mathfrak{T}(\vec{\mathcal{U}})$ denote the collection of all $\vec{\mathcal{U}}$-trees.
For any $c\in C$ and $T\in\mathfrak{T}$, define 
$T| c=\{t\in T:t\sqsubseteq c$ or $c\sqsubseteq t\}$,
 the collection of all nodes in $T$ comparable with $c$.
Let $\mathfrak{T}|c$ denote the collection of all $T|c$, $T\in\mathfrak{T}$.
For any tree $T$, let $[T]$ denote the collection of maximal branches through $T$;
thus $[T]\sse B$.
\end{notn}

The space of all $\vec{\mathcal{U}}$-trees forms a topological ultra-Ramsey space, whether or not the collection $\{[T]: T\in\mathfrak{T}(\vec{\mathcal{U}})\}$ is an ultrafilter.
For more on this topic, see \cite{TodorcevicBK10}.

The following observation was pointed out to us by Todorcevic.

\begin{fact}\label{fact.fubprod}
To every ultrafilter $\mathcal{W}$ which is a countable iteration of Fubini products of ultrafilters, there corresponds a front $B$ 
and ultrafilters $\vec{\mathcal{U}}=(\mathcal{U}_c :c\in C(B))$, such that 
$\{[T]:T\in\mathfrak{T}(\vec{\mathcal{U}})\}$
is  an ultrafilter on the base set $B$ isomorphic to $\mathcal{W}$. 
\end{fact}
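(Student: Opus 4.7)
The strategy is transfinite induction on the \emph{iteration rank} $\rho(\mathcal{W})$, defined by $\rho(\mathcal{U})=1$ for an ultrafilter $\mathcal{U}$ on $\om$ and $\rho(\lim_{n\ra\mathcal{V}}\mathcal{W}_n)=\sup_n\rho(\mathcal{W}_n)+1$. Since only countable iterations are admitted, $\rho(\mathcal{W})$ is a countable ordinal, so the induction is well-founded. At each stage I build a triple $(B,\vec{\mathcal{U}},\Phi)$ consisting of a front $B$, a sequence $\vec{\mathcal{U}}=(\mathcal{U}_c:c\in C(B))$ of nonprincipal ultrafilters on $\om$, and a bijection $\Phi$ from $B$ to the base set of $\mathcal{W}$ that will implement the desired isomorphism.

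In the base case $\mathcal{W}=\mathcal{U}$, take $B=\bN^{[1]}$, $C(B)=\{\emptyset\}$, $\mathcal{U}_\emptyset=\mathcal{U}$, and $\Phi(\{n\})=n$; the $\vec{\mathcal{U}}$-trees correspond bijectively to elements of $\mathcal{U}$. For the inductive step $\mathcal{W}=\lim_{n\ra\mathcal{V}}\mathcal{W}_n$, apply the induction hypothesis to each $\mathcal{W}_n$ to obtain $(B^n,\vec{\mathcal{U}}^n,\Phi_n)$, then fix order-isomorphisms $\iota_n:\om\ra(n,\om)$ and let $\widetilde{B}^n=\iota_n''B^n$ be the shifted front on $(n,\om)$, with transported ultrafilters $\widetilde{\mathcal{U}}^n_{c'}$ and bijection $\widetilde{\Phi}_n$. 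Glue:
\begin{equation*}
B=\bigcup_{n<\om}\{\{n\}\cup b:b\in\widetilde{B}^n\},\quad \mathcal{U}_\emptyset=\mathcal{V},\quad \mathcal{U}_{\{n\}\cup c'}=\widetilde{\mathcal{U}}^n_{c'},
\end{equation*}
and $\Phi(\{n\}\cup b)=(n,\widetilde{\Phi}_n(b))$. Checking that $B$ is a front on $\om$ is routine: $\sqsubseteq$-incomparability is inherited from each $\widetilde{B}^n$, and any infinite $X\sse\om$ with $n=\min X$ has $X\setminus\{n\}\sse(n,\om)$ starting with some $b\in\widetilde{B}^n$, so $\{n\}\cup b\sqsubset X$ lies in $B$.

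The main obstacle is proving that $\mathfrak{F}:=\{[T]:T\in\mathfrak{T}(\vec{\mathcal{U}})\}$ is an ultrafilter on $B$; filter-ness is immediate from closure of $\mathfrak{T}(\vec{\mathcal{U}})$ under intersection. For maximality, the plan is an auxiliary induction on $\rank(B)$: given $A\sse B$, the inductive hypothesis applied to the subfront rooted at each successor $\{m\}$ of the root yields a $\vec{\mathcal{U}}$-subtree $T_m$ of one of two ``colors'' (contained in $A$ or disjoint from $A$); then $\mathcal{U}_\emptyset$ picks a single color on a $\mathcal{U}_\emptyset$-large set $M$ of $m$'s, and $T=\{\emptyset\}\cup\bigcup_{m\in M}T_m$ is a $\vec{\mathcal{U}}$-tree with $[T]$ monochromatic. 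This bottom-up selection is the delicate step, but it succeeds precisely because $B$ is well-founded, which is guaranteed by $\rho(\mathcal{W})$ being a countable ordinal. Once $\mathfrak{F}$ is an ultrafilter, the identity $\Phi_*\mathfrak{F}=\mathcal{W}$ is a direct unwinding of Notation \ref{notation.cross} combined with the inductive isomorphisms: $X\in\mathcal{W}$ iff $\{n:\{j:(n,j)\in X\}\in\mathcal{W}_n\}\in\mathcal{V}$, which by the choice of $\mathcal{U}_\emptyset=\mathcal{V}$ and the isomorphisms $\widetilde{\Phi}_n$ translates exactly into the existence of a $\vec{\mathcal{U}}$-tree $T$ with $\Phi''[T]\sse X$.
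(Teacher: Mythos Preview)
Your proposal is correct and follows essentially the same route as the paper: induction on the iteration structure, with the inductive step performed by shifting each inner front via an order-isomorphism $\om\to\om\setminus(n+1)$ and gluing at the root with $\mathcal{U}_\emptyset=\mathcal{V}$. The only cosmetic differences are that the paper starts its base case one level higher (at a single Fubini product with $B=\bN^{[2]}$ rather than at a bare ultrafilter with $B=\bN^{[1]}$) and leaves the verification that $\{[T]:T\in\mathfrak{T}(\vec{\mathcal{U}})\}$ generates an ultrafilter implicit, whereas you supply an explicit argument by induction on $\rank(B)$; that extra step is sound but not strictly needed once the bijection $\Phi$ is in hand, since the pushforward of $\mathcal{W}$ along $\Phi^{-1}$ is automatically an ultrafilter.
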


\begin{proof}
Suppose that $\mathcal{W}=\lim_{n\ra\mathcal{V}}\mathcal{W}_n$.
Define $\mathcal{U}_{\emptyset}=\mathcal{V}$, and $\mathcal{U}_{\{n\}}=\mathcal{U}_n$ for each $n<\om$.
Let $B=\bN^{[2]}$ and $\vec{\mathcal{U}}=(\mathcal{U}_c:c\in \bN^{\le 1})$.
Let $\Delta$ denote the upper triangle $\{(m,n):m<n<\om\}$ on $\om\times\om$.
Let $\theta:\Delta\ra \bN^{[2]}$ by $\theta((m,n))=\{m,n\}$.
Then $\theta$ witnesses that
$\mathcal{W}\re\Delta=\{W\in \mathcal{W}:W\sse\Delta\}$ is isomorphic to $\{[T]:T\in\mathfrak{T}(\vec{\mathcal{U}})\}$.
Since $\mathcal{W}\re\Delta$ is isomorphic to the original $\mathcal{W}$, we have that $\{[T]:T\in\mathfrak{T}(\vec{\mathcal{U}})\}$ is isomorphic to $\mathcal{W}$.

For the inductive step, let $\mathcal{W}=\lim_{n\ra \mathcal{V}}\mathcal{W}_n$ be a Fubini product such that the  Fact holds for each $\mathcal{W}_n$, $n<\om$.
Thus,  for each $n$, there are a front $B(n)$ and ultrafilters $\mathcal{U}_c(n)$, $c\in C(B(n))$ such that $\mathcal{W}_n$ is isomorphic to the collection 
$\{[T]:T\in \mathfrak{T}(\mathcal{U}_c(n):c\in C(B(n)))\}$.
In the standard way, we modify the fronts  and then glue them together to obtain a new front which provides a base.
Let $B_{\{n\}}$ be the front on $\bN\setminus (n+1)$ which is the isomorphic image of $B(n)$,
via the isomorphism $\varphi_n:\om\ra \bN\setminus(n+1)$ by $\varphi_n(m)=n+1+m$.
Given $n$ and $c\in C(B_{\{n\}})$, 
let $\mathcal{U}_{\{n\}\cup c}$ denote $\mathcal{U}_{\varphi^{-1}_n(c)}(n)$.
Let $\mathcal{U}_{\emptyset}=\mathcal{V}$.
Finally let $B=\bigcup_{n<\om}\{\{n\}\cup b:b\in  B_{\{n\}}\}$.
Then $B$ is a front on $\bN$.
Further,
 $\{[T]:T\in\mathfrak{T}(\mathcal{W}_c:c\in C(B))\}$ is isomorphic to 
$\mathcal{U}$.
\end{proof}

For more on the iterative process of building new fronts from old ones, we refer the reader to \cite{Argyros/TodorcevicBK}.

\begin{defn}\label{defn.lo.on.front}
Let $\prec$ denote the following linear ordering on $\bN^{[<\infty]}$.
Given any $a,b\in\bN^{[<\infty]}$ with $a\ne b$,
enumerate them in increasing order as
$a=\{a_1,\dots,a_m\}$ and $b=\{b_1,\dots,b_n\}$.
Here  $m$ equals the cardinality of $a$ and $n$ equals the cardinality of $b$,
and no comparison between $m$ and $n$ is assumed.
Define 
$a\prec b$ iff 
\begin{enumerate}
\item
$a=\emptyset$; or
\item
$\max(a)<\max(b)$; or 
\item
$\max(a)=\max(b)$ and $a_i<b_i$, where $i$ is the least such that $a_i\ne b_i$.
\end{enumerate}
\end{defn}
Thus, $(\bN^{[<\infty]},\prec)$ is ordered as follows:
$\emptyset\prec\{0\}\prec\{0,1\}\prec\{1\}\prec\{0,1,2\}\prec\{0,2\}\prec\{1,2\}\prec\{2\}\prec\{0,1,2,3\}\prec\dots$.
Moreover, for each $k<\om$, the set $\{c\in\bN^{[<\infty]}:\max(c)=k\}$ forms a finite interval in $(\bN^{[<\infty]},\prec)$.

We would like to have a theorem stating that  any iterated Fubini product of p-points has basic Tukey reductions.
The following example illustrates why this is impossible.
Let $\mathcal{U}$ and $\mathcal{V}$ be any ultrafilters, p-points or otherwise, and let $f:\om\times\om\ra\om$ be given by $f((n,j))=n$.
Then
$f:\mathcal{U}\cdot\mathcal{V}\ra\mathcal{U}$    is a monotone cofinal map.
However, there is no cofinal $\mathcal{X}\sse\mathcal{U}\cdot\mathcal{V}$ for which $f\re\mathcal{X}$ is basic:
Given any linear ordering of $\om\times\om$ isomorphic to $\om$,
$f\re\mathcal{X}$ cannot be generated by an initial segment preserving map.  
For any $X\in\mathcal{U}\cdot\mathcal{V}$, for each $n$, there is no bound on the finite initial segment of $X$ needed in order to   know whether or not there is a $j$ for which $(n,j)$ is in  $X$.

However, 
in the space of $\vec{\mathcal{U}}$-trees,
we do obtain a theorem analogous to Theorem \ref{thm.ppoint}.
Given a front $B$ and a collection $\vec{\mathcal{U}}=(\mathcal{U}_c:c\in C(B))$ of p-points,
recall that the collection of $\vec{\mathcal{U}}$-trees has basis $D=B\cup C$.
Ordering $D$ by $\prec$, we obtain a linear ordering isomorphic to $\om$.
Thus, $2^D$ is isomorphic to the Cantor space.
It is in this topological space that we attain basic Tukey reductions.

\begin{thm}\label{thm.allFubProd_p-point_cts}
Let $B$ be any front and $\vec{\mathcal{U}}=(\mathcal{U}_c:c\in C)$ be a sequence of p-points.
Let $\mathfrak{T}$ denote the collection of $\vec{\mathcal{U}}$-trees.
If $f:\mathfrak{T}\ra\mathcal{V}$ is a monotone cofinal function, 
then there is a $\vec{\mathcal{U}}$-tree $\tilde{T}$ such that $f\re(\mathfrak{T}\re\tilde{T})$ is basic, in the topological space $2^D$.
\end{thm}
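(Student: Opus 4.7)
The approach is a fusion construction along the $\prec$-ordering of $D=B\cup C$, which is order-isomorphic to $\om$ and arranges the nodes of $D$ into finite blocks $\{d\in D:\max(d)=k\}$. The plan is to inductively build a $\vec{\mathcal{U}}$-tree $\tilde{T}$ together with a strictly increasing sequence $(k_m)_{m<\om}$ so that $f(T)\cap m$ becomes determined by $T\cap\{d_0,\dots,d_{k_m-1}\}$ whenever $T\in\mathfrak{T}\re\tilde{T}$; this stabilization is precisely what being basic requires, and it yields an initial-segment-and-level preserving generating map in a direct way.

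First, enumerate $D$ in $\prec$-order as $\{d_k:k<\om\}$, identifying $2^D$ with $2^\om$ and every $\vec{\mathcal{U}}$-tree $T$ with its characteristic function. Construct by recursion on $m$ the levels $k_m$, together with a decreasing sequence of sets $A_c^{(m)}\in\mathcal{U}_c$ for each $c\in C$, satisfying the stabilization property: any two $\vec{\mathcal{U}}$-trees $T,T'$ whose set of successors at each relevant $c$ lies in $A_c^{(m)}$ and which agree on $\{d_0,\dots,d_{k_m-1}\}$ must satisfy $f(T)\cap m=f(T')\cap m$. The key inductive claim is that, given the data through stage $m$, one can find $k_{m+1}>k_m$ and refinements $A_c^{(m+1)}\sse A_c^{(m)}$ in $\mathcal{U}_c$ forcing agreement up to $m+1$. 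This is where the p-point hypothesis enters: only finitely many $c\in C$ satisfy $c\cl d_{k_m}$, and the cofinality and monotonicity of $f$ into $\mathcal{V}$, combined with the ultrafilter property of $\mathcal{V}$ on its first $m+1$ coordinates, lets a pigeonhole-plus-p-point argument produce the refinement after finitely many applications of the p-point property of the relevant $\mathcal{U}_c$'s.

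Once the recursion is complete, for each $c\in C$ apply the p-point property of $\mathcal{U}_c$ one final time to obtain a single $\tilde{A}_c\in\mathcal{U}_c$ with $\tilde{A}_c\sse^* A_c^{(m)}$ for every $m$. Let $\tilde{T}$ be the $\vec{\mathcal{U}}$-tree whose set of successors at each $c\in\tilde{T}\cap C$ is $\tilde{A}_c$ (possibly after a uniform trimming of the finite exceptions in the $\sse^*$). Now define the level and initial segment preserving map $\hat{f}$ on the characteristic functions of $T\cap\{d_0,\dots,d_{k_m-1}\}$ for $T\in\mathfrak{T}\re\tilde{T}$ by sending each such $s$ to the common value $f(T)\cap m$; well-definedness follows from the stabilization, while monotonicity and initial segment preservation fall out of the monotonicity of $f$. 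This exhibits $f\re(\mathfrak{T}\re\tilde{T})$ as a basic map in the topology of $2^D$, as required.

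The main obstacle is the inductive step of the fusion, where one must simultaneously control the choices at the finitely many active nodes and ensure that the resulting stabilization holds for \emph{all} extensions in $\mathfrak{T}\re\tilde{T}$, not merely the ones considered in the pigeonhole. The fact that only finitely many $c\in C$ lie $\prec$-below any given $d_{k_m}$ is essential: without this blockwise finiteness (which is a feature of the $\prec$-ordering but would fail, for instance, for the lexicographic order) one would need a stronger Ramsey-theoretic pruning, for example via the abstract Ellentuck theorem for the topological ultra-Ramsey space of $\vec{\mathcal{U}}$-trees. The proof of the base case (front $\bN^{[1]}$) is precisely Theorem \ref{thm.ppoint}, and the fusion presented here is the natural iteration of that argument across the front.
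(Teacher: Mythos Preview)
Your overall architecture matches the paper's: a fusion along the $\prec$-enumeration of $D$, building auxiliary data at each stage that pin down $f(T)\cap m$ from a finite initial segment of $T$, then diagonalizing using the p-point property of each $\mathcal{U}_c$. The paper's Stage~1 does essentially what your inductive recursion describes, though it records full subtrees $T_k(c)\in\mathfrak{T}|c$ (one for each $c\in C\re k$) rather than just immediate-successor sets $A_c^{(m)}\in\mathcal{U}_c$. This matters: the stabilization you need is that $T$ lie inside a fixed \emph{tree} $S_k(A)$, not merely that its branching at finitely many nodes be constrained. The paper also imposes a nesting condition --- $T_k(c)\sse T_l(a)$ whenever $a\sqsubseteq c$, $l<k$, and $c\in T_l(a)$ --- which is essential for the final argument and is not visible if one tracks only successor sets.

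The genuine gap is the phrase ``possibly after a uniform trimming of the finite exceptions in the $\sse^*$.'' This is not a side remark; it is the core of the proof, occupying the paper's Stages~2 and~3 and the separate Lemma on meshing functions. The difficulty is circular: to apply stabilization at level $m$ to $T\sse\tilde T$, you need a threshold $\tilde n_m$ such that for \emph{every} $c\in\tilde T\cap C$ with $\max(c)<\tilde n_m$, the successors of $c$ in $\tilde T$ above $\tilde n_m$ already lie in the stage-$\tilde n_m$ set. But raising $\tilde n_m$ to absorb the finite $\sse^*$-exceptions for the $c$'s currently in view introduces new nodes $c$ with $\max(c)<\tilde n_m$, whose own exceptions may force $\tilde n_m$ higher still, and there is no a priori reason this process terminates. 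The paper resolves this by constructing \emph{meshing functions} $g_c$ (Stage~2) whose ranges share common values $g_{c_n}(2i)$, and then further functions $h_c$ (Stage~3) built by induction on the lexicographical rank of the front; these carve coordinated gaps in all the $\tilde U_c$ so that at the levels $h_\emptyset(2m)$ no new problematic $c$ can appear. Your sketch does not supply this mechanism, and without it (or the Ellentuck-theorem alternative you mention in passing) the fusion does not close.
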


\begin{proof}
For each $k<\om$ and any $A\sse D$, let $A\re k$ denote $\{a\in A:\max{a}<k\}$.
The proof is structured in three stages.
In Stage 1 we modify an argument from the proof of Theorem \ref{thm.ppoint}
 to fit the setting of trees.
For each $k<\om$ and $A\sse D\re k$, we obtain $\vec{\mathcal{U}}$-trees $S_k(A)$ such that for any $\vec{\mathcal{U}}$-tree $T\sse S_k(A)$ with $T\re k=A$, $j\in f(T)$ iff $j\in f(S_k(A))$, for all $j\le k$.
In Stage 2, we construct functions  $g_c$, $c\in C$, any two of which eventually line up in the following way.
For each $l$,  there are $m_0,\dots,m_l$ such that $g_{c_0}(2m_0)=g_{c_1}(2m_1)=\dots=g_{c_l}(2m_l)$.
We shall say that these functions $g_c$ \em mesh\rm.
These functions will be used to obtain sets $U_{c_j}\in\mathcal{U}_{c_j}$  for all $j\le l$, so that each   $U_{c_j}$ has an empty interval starting at $g_{c_0}(2m_0)$ and going up until $g_{c_j}(2m_j+1)$.
The tree $T^*$ will be defined as the tree such that for each node $c\in T^*$, the set of immediate extensions of $c$ in $T^*$ is exactly $U_c$.
In Stage 3,  we 
  thin $T^*$  to a better tree $\tilde{T}$ in a sort of reverse thinning process. 
This involves simultaneously widening the spaces before certain common points  in the ranges of finite collections of $g_c$.
We use the $h_c$ functions in Lemma \ref{lem.hconditions} to finish the thinning process to obtain $\tilde{T}$ such that $f$ is basic on $\mathfrak{T}\re\tilde{T}$, in the topological space $2^D$.
Having presented the outline, we now begin the proof.

Let   $\{c_i:i<\om\}$ enumerate $C$ in $\prec$-increasing order.
For any $k<\om$ and any $A\sse D$, let
 $A\re k$ denote the collection of $d\in A$ with $\max(d)<k$.
 For $a\sqsubseteq c\in C$ such that $c\in T(a)\in\mathfrak{T}|a$, 
 let $U_c(T(a))$ denote $\{l:l>\max(c)$ and $c\cup \{l\}\in T(a)\}$, the collection of $l$ which extend $c$ into $T(a)$.
There is no abuse of notation here, because $U_c(T(a))$ truly is a member of $\mathcal{U}_c$.
\vskip.1in

\noindent \bf Stage 1\rm. 
Choose trees $T_k(c)\in\mathfrak{T}|c$ which combine to build the desired $\vec{\mathcal{U}}$-trees $S_k(A)$ mentioned above.

\noindent \underline{Step 0}. \rm
$C\re 0 =\{\emptyset\}$.
Choose $T_0(\emptyset)$ such that (1), (2), and $(*_0)$ hold.
\begin{enumerate}
\item[(1)]
$T_0(\emptyset)\in\mathfrak{T}$.
\item[(2)]
$\min(W_{\emptyset}(T(\emptyset)))>0$.
\end{enumerate}
Let $S_0(\{\emptyset\})$ denote $T_0(\emptyset)$.
\begin{enumerate}
\item[$(*_0)$]
For each $T\in\mathfrak{T}$ such that $T\sse S_0(\{\emptyset\})$,
$0\in f(T)$ iff
$0\in f(S_0(\{\emptyset\}))$.
\end{enumerate}

Now, suppose $k\ge 1$ and for all $l<k$, we have done the construction for Step $l$.

\noindent\underline{Step k}. \rm
For each $c\in C\re k$, 
choose $T_k(c)$ such that (1) - (3),  $(S_k)$, and  $(*_k)$ hold.
\begin{enumerate}
\item[(1)]
$T_k(c)\in\mathfrak{T}| c$.
\item[(2)]
$\min(W_{c}(T(c)))>k$.
\item[(3)]
Suppose $c\in C\re k$,  $l<k$,
$a\in C\re l$, $a\sqsubseteq c$, and $c\in T_l(a)$.
Then $T_k(c)\sse T_l(a)$.
\end{enumerate}
(2) says that for each $d\in T_{k}(c)$ such that $d\sqsupset c$, $\min(d\setminus c)>k$.
(3) says that if $c$ appears in a tree $T_l(a)$ already chosen before step $k$, and $c$ equals or extends the stem $a$  of $T_l(a)$, then  $T_k (c)$ is a subtree of $T_l(a)$.

For each $A\sse D\re k$ closed under initial segments,
we define 
\begin{equation}
S_k(A)
=
\bigcup\{T_k(c):c\in A\cap C\}\cup(A\cap B).
\end{equation}

\begin{enumerate}
\item[$(S_k)$]
For each $A\sse D\re (k-1)$ closed under initial segments, $S_k(A)\sse S_{k-1}(A)$.
\end{enumerate}

\begin{enumerate}
\item[$(*_{k})$]
Suppose $A\sse D\re k$  is closed under initial segments, $T\in\mathfrak{T}$, and $T\sse S_k(A)$.
Then
$k\in f(T)$
iff
$k\in f(S_{k}(A))$.
\end{enumerate}

$(S_k)$ for all $k<\om$ implies  the following $(S)$.
$(S)$ along with $(*_k)$ for all $k<\om$ implies  $(*)$,  which will be essential in the proof that the function $f$ will be basic when restricted below a certain tree.

\begin{enumerate}
\item[$(S)$]
For all $l<k<\om$, for all $A\sse D\re l$ closed under initial segments, 
then $S_l(A)\sse S_k(A)$.
\end{enumerate}

\begin{enumerate}
\item[$(*)$]
Suppose $l<k<\om$,
$T\in\mathfrak{T}$ with $T\sse S_k(A)$, and 
$j\le l$.
Then $j\in f(T)$ iff $j\in f(S_k(A))$ iff $j\in f(S_l(A))$.
\end{enumerate}

We now show how to choose the trees for Stage 1.
Some useful notation is the following: For each $k<\om$ and  each $A\sse D\re k$ closed under initial segments, 
let 
\begin{equation}
D_k(A)=\{d\in D:d\in A\mathrm{\  or \ }\exists a\in A\, (d\sqsupset a\mathrm{\ and\ }\min(d\setminus a)>k)\}.
\end{equation}

For Step $0$,
 if there is a $\mathfrak{T}$-tree $R$ for which $0\not\in f(R)$, then let $T_0(\emptyset)$ be such a tree with $T_0(\emptyset)\sse D_0(\emptyset)$.
 Otherwise, for every $T\in \mathfrak{T}$, $0\in T$.
In this case,  let $T_0(\emptyset)=D_0(\{\emptyset\})$.
Then (1) and (2) are satisfied.
Further,  $f$ being monotone implies that $S_0(\emptyset)=T_0(\emptyset)$ satisfies $(*_0)$.
For suppose $T\in\mathfrak{T}$ and $T\sse T_0(\emptyset)$.
 If $0\in f(T_0(\emptyset))$,
 then  there is no $R\in\mathfrak{T}$ for which $0\not\in f(R)$, so
$0$ must also be in $f(T)$.
Conversely, if $0\not\in f(T_0(\emptyset))$, then $f$ being monotone and $T\sse T_0(\emptyset)$ imply that also $0\not\in f(T)$.

For the general Step $k\ge 1$, 
list all nonempty subsets of $D\re k$ which are closed under initial segments as $\lgl A_l:l< p_k\rgl$, where $p_k$ is the number of such sets.
For each $l<p_k$,
if there is an $R\in\mathfrak{T}$ such that $R\cap (D\re k)=A_l$ and $k\not\in f(R)$,
take such an $R$ such that also
 $R\sse D_k(A_l)$,
and label it $R_l$.
Otherwise, let $R_l=D_k(A_l)$.
For each $c\in C\re k$, define
\begin{equation}
T_k(c)=\bigcap\{R_l|c:l<p_k,\ c\in R_l \}\cap\bigcap\{T_j(a):j<k,\ a\sqsubseteq c,\ c\in T_j(a)\}.
\end{equation}

For each $c\in C\re k$,
$T_k(c)$ is an intersection of finitely many trees in $\mathfrak{T}|c$, and hence is itself in $\mathfrak{T}|c$; thus, (1) holds.
For $l<p_k$ such that $A_l=\{a\in C\re k:a\sqsubseteq c\}$,
we have $T_k(c)\sse R_l\sse D_k(A_l)$; hence (2) holds.
The rightmost intersection in the definition of $T_k(c)$ ensures that (3) and $(S_k)$ hold.
Moreover, for  each $l<p_k$ such that
$c\in A_l$,
$T_k(c)\sse R_l|c\sse R_l$.
Therefore,
\begin{equation}
S_k(A_l)=\bigcup\{T_k(c):c\in A_l\}\cup(A_l\cap B)
\sse R_l.
\end{equation}
Hence, $k\in f(S_k(A_l))$ iff $k\in f(R_l)$,
since $S_k(A_l)\sse R_l$ and $S_k(A_l)\re k=A_l=R_l\re k$.
By the definition of $R_l$, it follows that for each $\vec{\mathcal{U}}$-tree $T\sse S_k(A_l)$ with $T\re k=A_l$, $k\in f(T)$ iff $k\in S_k(A_l)$.
Thus, $(*_k)$ holds.
\vskip.1in

\noindent \bf Stage 2. \rm
For each $k$ and $c\in C\re k$, let $U_{c}(T_k(c))$ denote $\{l:c\cup\{l\}\in T_k(c)\}$.
By (1) and (2),  we have $U_c(T_k(c))\in\mathcal{U}_c$ and $\min(U_c(T_k(c)))>k$.
By (3), $U_{\emptyset}(T_0(\emptyset))\contains U_{\emptyset}(T_1(\emptyset))\contains\dots$.
Since $\mathcal{U}_{\emptyset}$ is a p-point,
 there is a $U^*_{\emptyset}\in \mathcal{U}_{\emptyset}$ such that 
$U^*_{\emptyset}\sse U_{\emptyset}(T_0(\emptyset))$ and 
 for each $k<\om$, $U^*_{\emptyset}\sse^* U_{\emptyset}(T_k(\emptyset))$.
Let  $n_0=2$.
Take $n_1>n_0$ such that $U^*_{\emptyset}\setminus n_1\sse U_{\emptyset}(T_{n_0}(\emptyset))$.
In general, take $n_{i+1}>n_i$ such that $U^*_{\emptyset}\setminus n_{i+1}\sse U_{\emptyset}(T_{n_i}(\emptyset))$.
Define $g_{\emptyset}:\om\ra\om$ by $g_{\emptyset}(i)=n_i$.
Either $\bigcup_{i<\om}[n_{2i},n_{2i+1})$ or $\bigcup_{i<\om}[n_{2i+1},n_{2i+2})$ is in $\mathcal{U}_{\emptyset}$.
Without loss of generality, assume that $\bigcup_{i<\om}[n_{2i+1},n_{2i+2})\in\mathcal{U}_{\emptyset}$.
(If it is not, then letting $n'_i=n_{i+1}$,  $\bigcup_{i<\om}[n'_{2i+1},n'_{2i+2})$ will be in $\mathcal{U}_{\emptyset}$.
The point is that we want to have uniform  indexing for this and successive stages by requiring the upper bound of the interval to have  even index so as to avoid more subindexing than is necessary.)
Let $U_{\emptyset}=U^*_{\emptyset}\cap (\bigcup_{i<\om}[n_{2i+1},n_{2i+2}))$.

For $n\ge 0$,
suppose we 
have chosen $g_{c_i}$ and $U_{c_i}$ for all $i\le n$.
For each $k\ge n+1$, 
 $T_k(c_{n+1})$ is defined,
and
 $U_{c_{n+1}}(T_k(c_{n+1}))\in\mathcal{U}_{c_{n+1}}$.
By our construction, 
$U_{c_{n+1}}(T_{n+1}(c_{n+1}))\contains U_{c_{n+1}}(T_{n+2}(c_{n+1})) \contains\dots$.
Since $\mathcal{U}_{c_{n+1}}$ is a p-point, there is a $U^*_{c_{n+1}}\in \mathcal{U}_{c_{n+1}}$ such that 
$U^*_{c_{n+1}}\sse U_{c_{n+1}}(T_{n+1}(c_{n+1}))$, and 
for all $k\ge n+1$, 
$U^*_{c_{n+1}}\sse^* U_{c_{n+1}}(T_k(c_{n+1}))$.
Let $g_{c_{n+1}}:\om\ra\om$ be a strictly increasing function such that 
$g_{c_{n+1}}(0)\ge n+1$ and 

\begin{enumerate}
\item[$(\dagger_n)$]
$   \forall m\  \exists i\ (g_{c_{n+1}}(m)=g_{c_n}(2i))$;
\end{enumerate}
and
\begin{enumerate}
\item[$(\ddagger)$]
$  \forall m,
U^*_{c_{n+1}}\setminus g_{c_{n+1}}(m+1)\sse
U_{c_{n+1}}(T_{g_{c_{n+1}}(m)}(c_{n+1}))$.
\end{enumerate}
Without loss of generality, 
\begin{equation}
\bigcup_{i<\om}[g_{c_{n+1}}(2i+1),g_{c_{n+1}}(2i+2))\in\mathcal{U}_{c_{n+1}}.
\end{equation}
Let 
\begin{equation}
U_{c_{n+1}}=U^*_{c_{n+1}}\cap (\bigcup_{i<\om}[g_{c_{n+1}}(2i+1),g_{c_{n+1}}(2i+2))).
\end{equation}
Note that $(\dagger_n)$ for all $n<\om$ implies
\begin{enumerate}
\item[$(\dagger)$]
For all $n<n'$, for all $m$, there is an $i$ such that 
$g_{c_n}(2i)=g_{c_{n'}}(m)$.
\end{enumerate}

Define $T^*\sse D$ to be the tree defined by the $U_{c}$, $c\in C$, as follows.
$\emptyset\in T^*$.
For each $n<\om$, $\{n\}\in T^*$ iff $n\in U_{\emptyset}$.
In general, if $c\in T^*$, then $\{l:c\cup\{l\}\in T^*\}=U_c$.
Thus, $T^*\in\mathfrak{T}$.
\vskip.1in

\noindent\bf Stage 3. \rm
Let $C^*=C\cap T^*$,
 and let $C_*=\{c\in C^*:\exists c'\in C^*\ (c'\sqsupset c)\}$.
So $C_*$ consists of all elements of $C^*$ which are not maximal in $C^*$. 
Define a strictly increasing sequence $(j_i)_{i<\om}$ as follows.
Let $j_0=g_{\emptyset}(0)$.
Take $j_1>j_0$ such that for each $c\in C^*\re j_0$, there is an $m$ such that $j_0<g_c(2m-1)$ and $g_c(2m)=j_1$.
In general, take $j_{i+1}>j_i$ such that for each $c\in C^*\re j_i$, there is an $m$ such that $j_i<g_c(2m-1)$ and $g_c(2m)=j_{i+1}$.

\begin{lem}\label{lem.hconditions}
Suppose $g_c$, $U_c\in\mathcal{U}_c$, $c\in C^*$, and $(j_i)_{i<\om}$
satisfy the following.
\begin{enumerate}
\item[(a)]
For each $k<\om$ and $c\in C^*\re k$, 
$T_k(c)$ and the corresponding sets $U_c(T_k(c))\in\mathcal{U}_c$ satisfy (1) - (3), $(S_k)$, and $(*)_k$, as in Stage 1.
\item[(b)]
$U^*_c\in\mathcal{U}_c$ and the $g_c$ are chosen as in Stage 2, and satisfy $(\dagger)$ and $(\ddagger)$.
\item[(c)]
For each $i<\om$ and each $c\in C^*$ with $\max(c)<j_i$,
there is an $m$ such that $g_c(2m)=j_{i+1}$
and $g_c(2m-1)>j_i$.
\item[(d)]
$U_c=U_c^*\cap Z_c$, where $Z_c=\bigcup_{i<\om}[g_c(2i+1),g_c(2i+2))\in\mathcal{U}_c$.
\end{enumerate}

Then there are functions $h_c$, $c\in C_*$, which satisfy the following.
For each $c\in C_*$,
\begin{enumerate}
\item[(i)]
For each $m$, $h_c(m)=j_i$, for some $i$.
\item[(ii)]
If $c$ is not maximal in $C_*$, then for each $l<l'$ in $U_c$
and each $m'$,
there is an $m$ such that 
$h_{c\cup\{l'\}}(m')=h_{c\cup\{l\}}(2m)$.
\item[(iii)]
If $c$ is not maximal in $C_*$,
then for  each $m$ and each $l\in U_c\cap h_c(m)$,
there is an $m_l$ such that 
$h_{c\cup\{l\}}(2m_l)=h_c(m+1)$,
and $h_{c\cup\{l\}}(2m_l-1)>h_c(m)$.
\item[(iv)]
For each $m\ge 1$ and each $a\sqsubset c$ with $\max(c)<h_{a}(2m-1)$,
there is an $m'$ such that 
$h_c(2m')=h_{a}(2m)$.
\item[(v)]
Letting $Y_c=\bigcup_{i<\om}[h_c(2i),h_c(2i+1))$, we have
$Y_c\in\mathcal{U}_c$.
\end{enumerate}
\end{lem}

Before proving  Lemma \ref{lem.hconditions}, we first use it  to prove the Theorem.
Given the $Y_c$, $c\in C_*$, from Lemma  \ref{lem.hconditions} (v),
define 
 $\tilde{U}_c=U_c\cap Y_c$.
 For $c\in C^*\setminus C_*$, let $\tilde{U}_c=U_c$.
Define the $\vec{\mathcal{U}}$-tree $\tilde{T}$ as follows.
$\emptyset\in\tilde{T}$, and $\{l:\{l\}\in\tilde{T}\}=\tilde{U}_{\emptyset}$.
Given $c$ already defined to be in $\tilde{T}$,
let $\{l:c\cup\{l\}\in \tilde{T}\}=\tilde{U}_c$.
Thus, if $c\in \tilde{T}$, then
$U_c(\tilde{T})=\tilde{U}_c$.

By (c), (i), and (iv),
for all $a\sqsubset c$ in $\tilde{T}$
and 
$m\ge 1$  with $\max(c)<h_{a}(2m-1)$,
there is an $m'$ such that $h_c(2m')=h_a(2m)$.
Moreover,
 there is an $i$ such that $h_c(2m')=g_c(2i)$.
Therefore,
 \begin{equation}
 \tilde{U}_c\cap[h_{a}(2m),g_c(2i+1))
 =\tilde{U}_c\cap [g_c(2i),g_c(2i+1))=\emptyset.
 \end{equation}
Hence, 
\begin{equation}
\tilde{U}_c\setminus h_{a}(2m)\sse
U^*_c\setminus g_c(2i+1)
\sse 
U_c(T_{g_c(2i)})
=U_c(T_{h_{a}(2m)}).
\end{equation}
In particular,
for each $c$ with $\max(c)<h_{\emptyset}(2m)$,
there is an $i$ such that 
$g_c(2i)=h_{\emptyset}(2m)$.
So $\tilde{U}_c\cap[h_{\emptyset}(2m),g_c(2i+1))=\emptyset$.
Hence, 
\begin{equation}
\tilde{U}_c\setminus h_{\emptyset}(2m)
\sse
U^*_c\setminus g_c(2i+1)
\sse U_c(T_{g_c(2i)})
=U_c(T_{h_{\emptyset}(2m)}).
\end{equation}

\begin{claimn}\label{claim.Tinitseg}
Take $(n_k)_{k<\om}$ to be a strictly increasing sequence such that $h_{\emptyset}(2n_k)>k$, and 
let $\tilde{n}_k=h_{\emptyset}(2n_k+2)$.
Then
for any $\vec{\mathcal{U}}$-tree $T\sse\tilde{T}$,
 $k\in f(T)$ iff
$k\in f(S_{\tilde{n}_k}(A))$,
where $A=T\re \tilde{n}_k$.
\end{claimn}

\begin{proof}
Let $T\sse\tilde{T}$ be a $\vec{\mathcal{U}}$-tree, and let $k<\om$ be given.
Let $n$ denote $n_k$ and   $\tilde{n}$ denote $\tilde{n}_k$. 
Let $A=T\re\tilde{n}$.
Since,
$\tilde{U}_{\emptyset}\cap[h_{\emptyset}(2n+1),h_{\emptyset}(2n+2))=\emptyset$, it follows that every $l_0\in\tilde{U}_{\emptyset}\cap \tilde{n}$ is actually less than $h_{\emptyset}(2n +1)$.
Hence, $h_{\{l_0\}}(2m_{\{l_0\}}+2)=\tilde{n}$ for some $m_{\{l_0\}}$, by (iii)
By (c) and (i), there is a $p_{\{l_0\}}$ such that $g_{\{l_0\}}(2p_{\{l_0\}})=\tilde{n}$.
Since $\tilde{U}_{\{l_0\}}\cap[h_{\{l_0\}}(2m_{\{l_0\}}) +1),h_{\{l_0\}}(2m_{\{l_0\}} +2))=\emptyset$,
each $l_1$ in $\tilde{U}_{\{l_0\}}\cap \tilde{n}$ is actually less than $h_{\{l_0\}}(2m_{\{l_0\}}+1)$.
So by (iii) there is some $m_{\{l_0,l_1\}}$ such that $h_{\{l_0,l_1\}}(2m_{\{l_0,l_1\}}+2)=\tilde{n}$.
By (c) and (i), there is a $p_{\{l_0,l_1\}}$ such that $g_{\{l_0\}}(2p_{\{l_0,l_1\}})=\tilde{n}$.
This process continues  as long as $\max(c)<\tilde{n}$.
That is, for $c=\{l_0,\dots,l_r\}$ with $l_r<\tilde{n}$,
there is an $m_c$ such that $h_c(2m_c+2)=\tilde{n}$,
and there is a $p_c$ such that $g_c(2p_c)=\tilde{n}$. 
Thus, we have
\begin{equation}
h_c(2m_c +1)<h_c(2m_c +2)=\tilde{n}= g_c(2p_c)<g_c(2p_c+1).
\end{equation}
It follows that
\begin{equation}
\tilde{U}_c\cap[h_c(2m_c +1),g_c(2p_c+1))=\emptyset.
\end{equation}
The point  is that  for each $c\in T\cap (C\re\tilde{n})$, 
\begin{equation}
U_c(\tilde{T})\setminus \tilde{n}
\sse 
U_c(T_{\tilde{n}}),
\end{equation}
and
 for each $c\in T\cap C$ with $\max(c)\ge\tilde{n}$,
\begin{equation}
T\re c\sse T_{\tilde{n}}|c.
\end{equation} 
Since 
\begin{equation}
T=A\cup\bigcup\{T|c:c\in T\cap C\, (\max(c)\ge\tilde{n})\},
\end{equation}
it follows that
\begin{enumerate}
\item[$(\star)$] 
 $k\in f(T)$ iff
$k\in f(S_{\tilde{n}}(A))$.
\end{enumerate}
\end{proof}

Ordering $D$ by $\prec$ as $\lgl d_i:i<\om\rgl$, we obtain a linear ordering isomorphic to $\om$.
We consider the space $2^D$ of all characteristic functions of subsets of $D$  as a topological space isomorphic to the Cantor space.

\begin{claimn}\label{claim.fcts}
 $f\re(\mathfrak{T}\re\tilde{T})$ is a basic map, generated by a monotone, level and initial segment preserving map $\hat{f}:\bigcup_{k<\om}2^{D\re\tilde{n}_k}\ra 2^{<\om}$.
\end{claimn}

\begin{proof}
Given $k<\om$, a finite subset $A\sse \tilde{T}\re \tilde{n}_k$,  and $s(A)$ the characteristic function of $A$ with domain $D\re \tilde{n}_k$,
define 
$\hat{f}(s(A))$
to be the characteristic function with domain $k+1$ of 
$f(S_{\tilde{n}_k}(A))\cap (k+1)$.
Then $\hat{f}$ is level and initial segment preserving.
Furthermore, for each $T\in\mathfrak{T}\re\tilde{T}$,
\begin{equation}
f(T)=\bigcup_{k<\om}d(\hat{f}(s(T\re \tilde{n}_k))).
\end{equation}
Thus, $f$ is continuous on $\mathfrak{T}\re\tilde{T}$.
\end{proof}

This concludes the proof of Theorem \ref{thm.allFubProd_p-point_cts}, 
modulo the proof of Lemma \ref{lem.hconditions} which we now give.

\begin{proof} (\it of Lemma \ref{lem.hconditions}). \rm
The proof is by induction on the lexicographical rank of the front.
Suppose $B=\bN^{[2]}$  corresponding to a Fubini product $\lim_{n\ra\mathcal{U}_{\emptyset}}\mathcal{U}_{\{n\}}$,
and the hypotheses (a) - (d) are satisfied.
Define $h_{\emptyset}:\om\ra\om$ as follows.
Let $h_{\emptyset}(0)=g_{\emptyset}(0)$.
Take $h_{\emptyset}(1)>h_{\emptyset}(0)$ such that 
$h_{\emptyset}(1)=j_{i_1}$ for some $i_1$, and 
for each $l<h_{\emptyset}(0)$ there is an $m$ such that $g_{\{l\}}(2m)=h_{\emptyset}(1)$ and $g_{\{l\}}(2m-1)>h_{\emptyset}(0)$.
In general, 
take
$h_{\emptyset}(k+1)>h_{\emptyset}(k)$ such that
\begin{enumerate}
\item
$h_{\emptyset}(k+1)=j_{i_{k+1}}$ for some $i_{k+1}$;
and 
\item
for each $l<h_{\emptyset}(k)$ there is an $m$ such that $g_{\{l\}}(2m)=h_{\emptyset}(k+1)$ and $g_{\{l\}}(2m-1)>h_{\emptyset}(k)$.
\end{enumerate}

Without loss of generality, suppose that $Y_{\emptyset}=\bigcup_{i<\om}[h_{\emptyset}(2i),h_{\emptyset}(2i+1))\in\mathcal{U}_{\emptyset}$.
Let $\tilde{U}_{\emptyset}=U_{\emptyset}\cap Y_{\emptyset}$.
Then (i) and (v) hold.  (ii) - (iv) are trivially satisfied, since $C_*=\emptyset$ when $B=\bN^{[2]}$.

Now suppose that $B$ is a front of lexicographical rank $\al$  and that the Lemma holds for all 
fronts of smaller rank.
For each $n<\om$,
let $B_{n}=\{b\in B:\min(b)=n\}$.
Note that $B_{n}$ is isomorphic to $B_{\{n\}}:=\{b\setminus \{n\}:b\in B_n\}$, which is a front on $\bN\setminus (n+1)$.
Thus, the induction hypothesis applies to $B_n$.
Enumerate  the elements of $U_{\emptyset}$ as $l_0<l_1<l_2<\dots$.
Define $C_{l_n}$ to be $\{c\in C : c\sqsupset \{l_n\}\}$.

Use the induction hypothesis on $B_{l_0}$ with the sequence $(j_i)_{i<\om}$
 to find meshing functions $h_c$ which satisfy (i) - (v) with regard to $(j_i)_{i<\om}$ for each $c\in C_{l_0}$. 
Next,  define $j^1_i=h_{\{l_0\}}(2i)$, for each $i<\om$. 
Use the induction hypothesis on $B_{l_1}$ with the sequence $(j^1_i)_{i<\om}$ to obtain meshing functions $h_c$
for each $c\in C_{l_1}$ which satisfy (i) - (v) with regard to $(j^1_i)_{i<\om}$.
In general, define  $j^{n+1}_i=h_{\{l_n\}}(2i)$, $i<\om$, 
and use the sequence $(j^{n+1}_i)_{i<\om}$
with the  induction hypothesis on $B_{l_{n+1}}$ to find meshing functions $h_c$ for each $c\in C_{l_{n+1}}$ which satisfy (i) - (v) with regard to $(j^{n+1}_i)_{i<\om}$.
This part of the construction yields (ii) for $B$.

Finally, construct $h_{\emptyset}:\om\ra\om$ to mesh with  all  the $h_{\{l_n\}}$, $n<\om$, as follows.
Let $h_{\emptyset}=h_{\{l_0\}}(2)$.
Given $h_{\emptyset}(i)$,
take $h_{\emptyset}(i+1)$ to be some $j^{h_{\emptyset}(i)}_{p}
>h_{\emptyset}(i)$ for some $p$ such that for each $c\in C^*\setminus\{\emptyset\}$ with $\max(c)<h_{\emptyset}(i)$,
there is an $m_c$ such that $h_c(2m_c)=h_{\emptyset}(i+1)$ and $h_c(2m_c-1)>h_{\emptyset}(i)$.
Without loss of generality, assume that $Y_{\emptyset}=\bigcup_{i<\om}[h_{\emptyset}(2i),h_{\emptyset}(2i+1))\in\mathcal{U}_{\emptyset}$.
Let $\tilde{U}_{\emptyset}=U_{\emptyset}\cap Y_{\emptyset}$.
Hence, (v) holds.
Since each sequence $(j^{k+1}_i)_{i<\om}$ is a subsequence of $(j^k_i)_{i<\om}$ and by the definition of $h_{\emptyset}$, we have satisfied (i).
(iii) and (iv) follow from the application of the induction hypothesis and the definition of $h_{\emptyset}$.
\end{proof}

Thus, the proof of Theorem \ref{thm.allFubProd_p-point_cts} is now complete.
\end{proof}

\begin{rem}\label{rem.stable}
The same construction can be carried out if  the $\mathcal{U}_c$ are stable ordered union ultrafilters on the base set $\FIN=[\om]^{<\om}\setminus\{\emptyset\}$, with the correct notion of front for this setting.
It follows from Theorem \ref{thm.allFubProd_p-point_cts} that every ultrafilter which is Tukey reducible to some countable iteration of Fubini products of p-points or stable ordered-union ultrafilters has Tukey type of cardinality $\mathfrak{c}$.
\end{rem}

One can check that the analogue of the first half of Theorem \ref{thm.PropACtsMaps} and the analogue of Theorem \ref{thm.PropB} 
also hold in the the setting of $\vec{\mathcal{U}}$-trees.
Continuity is interpreted in terms of the stronger convergence in such a space; thus, we do not obtain continuous maps in the classical sense.
However, we still do obtain monotone, level and initial segment  preserving maps, thus the following generalization of the Main Theorem.




\begin{thmMTUT}\label{thm.MainThmUtrees}
Let $B$ be a front, $\vec{\mathcal{U}}=(\mathcal{U}_c:c\in C)$ be a sequence of p-points, and
$\mathcal{W}$ be an ultrafilter on $\om$
such that $\mathcal{W}\le_T\mathfrak{T}(\vec{\mathcal{U}})$.
Then for each ultrafilter $\mathcal{V}\le_T\mathcal{W}$ and each monotone cofinal map $h:\mathcal{W}\ra\mathcal{V}$,
there is a monotone, level and initial segment preserving map  which generates $h$.
\end{thmMTUT}

\begin{cor}\label{cor.c}
Suppose $\mathcal{W}$ is an ultrafilter on $\om$ which is Tukey reducible to  $\mathfrak{T}(\vec{\mathcal{U}})$, where $\vec{\mathcal{U}}=(\mathcal{U}_c:c\in C)$ and each $\mathcal{U}_c$ is a p-point.
Then $\mathcal{W}$ has Tukey type of cardinality $\mathfrak{c}$.
\end{cor}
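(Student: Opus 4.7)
\textbf{Proof proposal for Corollary \ref{cor.c}.}

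The plan is to obtain the bound by a counting argument, using the Main Theorem for $\vec{\mathcal{U}}$-trees to reduce each cofinal map to a hereditarily countable piece of data. First I fix any ultrafilter $\mathcal{V}$ which is Tukey equivalent to $\mathcal{W}$. Then $\mathcal{V}\le_T\mathcal{W}\le_T\mathfrak{T}(\vec{\mathcal{U}})$, so by (the monotone version of) Schmidt's observation recalled in the introduction, I may choose a monotone cofinal map $h_{\mathcal{V}}:\mathcal{W}\ra\mathcal{V}$. Applying the Main Theorem for $\vec{\mathcal{U}}$-trees to $h_{\mathcal{V}}$ produces a monotone, level and initial segment preserving map $\hat{h}_{\mathcal{V}}:\bigcup_{m<\om}2^{k_m}\ra 2^{<\om}$ (for some strictly increasing sequence $(k_m)_{m<\om}$) that generates $h_{\mathcal{V}}$.

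Next I count such generating maps. Any level and initial segment preserving map is determined by specifying a strictly increasing sequence $(k_m)_{m<\om}\in\om^{\om}$ together with a function from the countable set $\bigcup_{m<\om}2^{k_m}$ into the countable set $2^{<\om}$. The total number of such objects is at most $\aleph_{0}^{\aleph_{0}}=\mathfrak{c}$. Hence the collection
\[
\mathcal{F}=\{\hat{h}:\hat{h}\text{ is monotone, level and initial segment preserving on some }\textstyle\bigcup_{m}2^{k_m}\}
\]
has cardinality at most $\mathfrak{c}$.

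Finally, each $\hat{h}\in\mathcal{F}$ determines at most one ultrafilter $\mathcal{V}$ in the Tukey type of $\mathcal{W}$: once $\hat{h}$ is fixed, the map $h$ it generates is uniquely determined on $\mathcal{W}$, and in turn $\mathcal{V}$ is uniquely determined as the ultrafilter generated by $h''\mathcal{W}$ (cofinality of $h$). Since the map $\mathcal{V}\mapsto\hat{h}_{\mathcal{V}}$ I defined above is injective up to at most countable ambiguity, the Tukey equivalence class of $\mathcal{W}$ has cardinality at most $\mathfrak{c}$. Together with the trivial observation that this class contains at least the Rudin--Keisler orbit of $\mathcal{W}$ under permutations of $\om$, which has cardinality $\mathfrak{c}$ (and every Rudin--Keisler equivalence gives a Tukey equivalence), I conclude that the Tukey type of $\mathcal{W}$ has cardinality exactly $\mathfrak{c}$.

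There is no real obstacle: the substance of the corollary is entirely the Main Theorem for $\vec{\mathcal{U}}$-trees, and the rest is a bookkeeping argument. The only care needed is to note that specifying the sequence $(k_m)_{m<\om}$ in the definition of ``level preserving'' contributes a factor of $\mathfrak{c}$ rather than $2^{\mathfrak{c}}$, which is immediate since $(k_m)_{m<\om}\in\om^{\om}$.
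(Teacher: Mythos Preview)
Your argument is correct and is precisely the counting argument the paper leaves implicit: the corollary is stated immediately after the Main Theorem for $\vec{\mathcal{U}}$-trees without a separate proof, the point being exactly that a level and initial segment preserving generator $\hat{h}$ is a countable object, so there are only $\mathfrak{c}$ of them, and each determines (via $\mathcal{W}$) the target ultrafilter $\mathcal{V}$. Two small remarks: the existence of a \emph{monotone} cofinal map is Fact~6 of \cite{Dobrinen/Todorcevic10} rather than Schmidt's observation, and your phrase ``injective up to at most countable ambiguity'' is unnecessary---the map $\hat{h}\mapsto\mathcal{V}$ is simply well-defined, which is all you need.
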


Finally, a word about the notion of convergence in the space of $\vec{\mathcal{U}}$-trees versus the notion of convergence in an iterated Fubini product of ultrafilters.
Let  $\mathcal{V}$ be some countable iteration of Fubini products of p-points, let $B$ the denote a front  and $\vec{\mathcal{U}}=(\mathcal{U}_c:c\in C)$ the p-points
so that $\mathcal{V}\cong \{[T]:T\in\mathfrak{T}(\vec{\mathcal{U}})\}$.
In general, convergence for $\vec{\mathcal{U}}$-trees $T$ in the topological space $2^D$ is stronger than convergence for elements of $\{[T]:T\in\mathfrak{T}(\vec{\mathcal{U}})\}$ in the topological space $2^B$.
However, in the case that all $\mathcal{U}_c$ are the same selective ultrafilter $\mathcal{U}$
the two notions of convergence coincide on the following cofinal subsets:
the collection $\{B|U:U\in \mathcal{U}\}$, a cofinal subset of $\mathcal{V}$, and $\{D|U:U\in\mathcal{U}\}$, a cofinal subset of $\mathfrak{T}(\vec{\mathcal{U}})$.
(Here $B|U=\{b\in B:b\sse U\}$ and $D|U=\{d\in D:d\sse U\}$.)


\section{Open problems}
\label{sec.op}

We conclude this paper with the following problems.

\begin{problem1}
Determine the class of all ultrafilters which have basic Tukey reductions.
\end{problem1}

Recall 
 that every ultrafilter Tukey below a p-point or Tukey below a stable ordered-union ultrafilter has basic Tukey reductions.
Are there (consistently) any others?
It is likely that those ultrafilters which have basic Tukey reductions will be those that have some p-point-like property, in the sense that for some suitably defined analogue of $\contains^*$, any decreasing sequence of elements of the ultrafilter will have some sort of  pseudointersection.

\begin{problem2}
Does $\mathcal{U}<_T\mathcal{U}_{\mathrm{top}}$ imply that the Tukey type of $\mathcal{U}$ has size $\mathfrak{c}$?
\end{problem2}

Recall that  Theorem \ref{thm.3} implies that the top Tukey type has cardinality $2^{\mathfrak{c}}$.  
On the other hand, all currently considered ultrafilters with Tukey type strictly below $\mathcal{U}_{\mathrm{top}}$ have Tukey type of cardinality $\mathfrak{c}$.
(This follows from work of
 Raghavan in \cite{Raghavan/Todorcevic11} for  basically generated ultrafilters, and from Remark \ref{rem.stable}
 for all Fubini iterations of stable ordered-union ultrafilters.)

\bibliographystyle{model1-num-names}
\bibliography{references1}

\begin{thebibliography}{18}
\expandafter\ifx\csname natexlab\endcsname\relax\def\natexlab#1{#1}\fi
\providecommand{\bibinfo}[2]{#2}
\ifx\xfnm\relax \def\xfnm[#1]{\unskip,\space#1}\fi
\bibitem[{Schmidt(1955)}]{Schmidt55}
\bibinfo{author}{J.~Schmidt},
\newblock \bibinfo{title}{Konfinalit{\"{a}}t},
\newblock \bibinfo{journal}{Zeitschrift f{\"{u}}r Mathematische Logik und
  Grundlagen der Mathematik} \bibinfo{volume}{1} (\bibinfo{year}{1955})
  \bibinfo{pages}{271--303}.
\bibitem[{Tukey(1940)}]{Tukey40}
\bibinfo{author}{J.~W. Tukey}, \bibinfo{title}{Convergence and uniformity in
  topology}, \bibinfo{publisher}{Princeton University Press},
  \bibinfo{year}{1940}.
\bibitem[{Todorcevic(1996)}]{Todorcevic96}
\bibinfo{author}{S.~Todorcevic},
\newblock \bibinfo{title}{A classification of transitive relations on
  $\omega_1$},
\newblock \bibinfo{journal}{Proceedings of the London Mathematical Society}
  \bibinfo{volume}{(3) 73} (\bibinfo{year}{1996}) \bibinfo{pages}{501--533}.
\bibitem[{Day(1944)}]{Day44}
\bibinfo{author}{M.~M. Day},
\newblock \bibinfo{title}{Oriented systems},
\newblock \bibinfo{journal}{Duke Mathematical Journal} \bibinfo{volume}{11}
  (\bibinfo{year}{1944}) \bibinfo{pages}{201--229}.
\bibitem[{Isbell(1965)}]{Isbell65}
\bibinfo{author}{J.~Isbell},
\newblock \bibinfo{title}{The category of cofinal types. {II}},
\newblock \bibinfo{journal}{Transactions of the American Mathematical Society}
  \bibinfo{volume}{116} (\bibinfo{year}{1965}) \bibinfo{pages}{394--416}.
\bibitem[{Todorcevic(1985)}]{TodorcevicDirSets85}
\bibinfo{author}{S.~Todorcevic},
\newblock \bibinfo{title}{Directed sets and cofinal types},
\newblock \bibinfo{journal}{Transactions of the American Mathematical Society}
  \bibinfo{volume}{290} (\bibinfo{year}{1985}) \bibinfo{pages}{711--723}.
\bibitem[{Milovich(2008)}]{Milovich08}
\bibinfo{author}{D.~Milovich},
\newblock \bibinfo{title}{Tukey classes of ultrafilters on $\om$},
\newblock \bibinfo{journal}{Topology Proceedings} \bibinfo{volume}{32}
  (\bibinfo{year}{2008}) \bibinfo{pages}{351--362}.
\bibitem[{Dobrinen and Todorcevic(2012)}]{Dobrinen/Todorcevic10}
\bibinfo{author}{N.~Dobrinen}, \bibinfo{author}{S.~Todorcevic},
\newblock \bibinfo{title}{Tukey types of ultrafilters},
\newblock \bibinfo{journal}{Illinois Journal of Mathematics}
  (\bibinfo{year}{2012}). \bibinfo{note}{To appear}.
\bibitem[{Raghavan and Todorcevic(2011)}]{Raghavan/Todorcevic11}
\bibinfo{author}{D.~Raghavan}, \bibinfo{author}{S.~Todorcevic},
\newblock \bibinfo{title}{Cofinal types of ultrafilters},
\newblock \bibinfo{journal}{Annals of Pure and Applied Logic}
  (\bibinfo{year}{2011}). \bibinfo{note}{To appear}.
\bibitem[{Dobrinen and Todorcevic(2011)}]{Dobrinen/Todorcevic11}
\bibinfo{author}{N.~Dobrinen}, \bibinfo{author}{S.~Todorcevic},
\newblock \bibinfo{title}{A {C}anonical {R}amsey {T}heorem and its application
  in the {T}ukey theory of ultrafilters}  (\bibinfo{year}{2011}).
  \bibinfo{note}{Preprint}.
\bibitem[{Juh{\'{a}}sz(1967{\natexlab{a}})}]{Juhasz66}
\bibinfo{author}{I.~Juh{\'{a}}sz},
\newblock \bibinfo{title}{Remarks on a theorem of {B}.\
  {P}osp{\'{i}}{\v{s}}il},
\newblock in: \bibinfo{booktitle}{General Topology and its Relations to Modern
  Analysis and Algebra}, \bibinfo{publisher}{Academia Publishing House of the
  Czechoslovak Academy of Sciences, Praha}, \bibinfo{year}{1967}{\natexlab{a}},
  pp. \bibinfo{pages}{205--206}.
\bibitem[{Juh{\'{a}}sz(1967{\natexlab{b}})}]{Juhasz67}
\bibinfo{author}{I.~Juh{\'{a}}sz},
\newblock \bibinfo{title}{Remarks on a theorem of {B}.\ {P}osp{\'{i}}{\v{s}}il.
  ({R}ussian)},
\newblock \bibinfo{journal}{Commentationes Mathematicae Universitatis
  Carolinae} \bibinfo{volume}{8} (\bibinfo{year}{1967}{\natexlab{b}})
  \bibinfo{pages}{231--247}.
\bibitem[{Posp{\'{i}}{\v{s}}il(1939)}]{Pospisil39}
\bibinfo{author}{B.~Posp{\'{i}}{\v{s}}il},
\newblock \bibinfo{title}{On bicompact spaces},
\newblock \bibinfo{journal}{Publ. Fac. Sci. Univ. Masaryk}
  \bibinfo{volume}{270} (\bibinfo{year}{1939}).
\bibitem[{Solecki and Todorcevic(2004)}]{Solecki/Todorcevic04}
\bibinfo{author}{S.~Solecki}, \bibinfo{author}{S.~Todorcevic},
\newblock \bibinfo{title}{Cofinal types of topological directed orders},
\newblock \bibinfo{journal}{Annales de L'Institut Fourier} \bibinfo{volume}{54}
  (\bibinfo{year}{2004}) \bibinfo{pages}{1877--1911}.
\bibitem[{Laflamme(1989)}]{Laflamme89}
\bibinfo{author}{C.~Laflamme},
\newblock \bibinfo{title}{Forcing with filters and complete combinatorics},
\newblock \bibinfo{journal}{Annals of Pure and Applied Logic}
  \bibinfo{volume}{42} (\bibinfo{year}{1989}) \bibinfo{pages}{125--163}.
\bibitem[{Blass(1987)}]{Blass87}
\bibinfo{author}{A.~Blass},
\newblock \bibinfo{title}{Ultrafilters related to {H}indman's finite-unions
  theorem and its extensions},
\newblock \bibinfo{journal}{Contemporary Mathematics} \bibinfo{volume}{65}
  (\bibinfo{year}{1987}) \bibinfo{pages}{89--124}.
\bibitem[{Argyros and Todorcevic(2005)}]{Argyros/TodorcevicBK}
\bibinfo{author}{S.~A. Argyros}, \bibinfo{author}{S.~Todorcevic},
  \bibinfo{title}{Ramsey {M}ethods in {A}nalysis},
  \bibinfo{publisher}{Birkh{\"{a}}user}, \bibinfo{year}{2005}.
\bibitem[{Todorcevic(2010)}]{TodorcevicBK10}
\bibinfo{author}{S.~Todorcevic}, \bibinfo{title}{Introduction to {R}amsey
  {S}paces}, \bibinfo{publisher}{Princeton University Press},
  \bibinfo{year}{2010}.

\end{thebibliography}

\end{document}